\newtheorem{defn}{Definition}
\newtheorem{lem}[defn]{Lemma}
\newtheorem{thm}[defn]{Theorem}
\newtheorem{cor}[defn]{Corollary}
\newtheorem{prop}[defn]{Proposition}
\newcommand{\bU}{\bar U}
\newcommand{\bV}{\bar V}
\newcommand{\Ext}{\mathrm{Ext}}
\newcommand{\T}{\tilde}
\newcommand{\Z}{\mathbb Z}
\newcommand{\HGH}[2][G]{#2\backslash #1/#2}
\newcommand{\Hom}{\mathrm{Hom}}
\begin{document}
\begin{titlepage}
\title{Reduction of the Hall-Paige Conjecture to Sporadic Simple Groups}
\author{Stewart Wilcox}
\address{Stewart Wilcox \\ Department of Mathematics \\ Harvard University \\ 1 Oxford Street \\ Cambridge MA 02318 \\ USA}
\email{swilcox@fas.harvard.edu}
\end{titlepage}

\begin{abstract}
A complete mapping of a group $G$ is a permutation $\phi:G\rightarrow G$ such 
that $g\mapsto g\phi(g)$ is also a permutation. Complete mappings of $G$ are equivalent to 
tranversals of the Cayley table of $G$, considered as a latin square. In 1953, Hall and Paige 
proved that a finite group admits a complete mapping only if its Sylow-2 subgroup is trivial or 
non-cyclic. They conjectured that this condition is also sufficient. We 
prove that it is sufficient to check the conjecture for the 26 sporadic simple groups and the Tits group.
\end{abstract}
\maketitle
\section{Introduction}
All groups will be assumed finite. Let $G$ be a group. For the sake of brevity, we say $G$ 
is \emph{bad} if the Sylow-2 subgroup of $G$ is nontrivial and cyclic, and otherwise we say $G$ 
is \emph{good}. A \emph{complete mapping} of $G$ consists of an indexing set $I$ and bijections 
$a,b,c:I\rightarrow G$, such that
\[
  a(i)b(i)=c(i)
\]
for all $i\in I$. Note that $\phi=ba^{-1}$ and $\psi=ca^{-1}$ are bijections, so $G$ possesses a complete 
mapping if and only if there are permutations $\phi$ and $\psi$ of $G$ with $g\phi(g)=\psi(g)$. Complete 
mappings also have a combinatorial interpretation; a group possesses a complete mapping if and only if its 
Cayley table, which is a latin square, possesses an orthogonal mate \cite{Mann}.

Hall and Paige \cite{HP} proved that if $G$ possesses a complete mapping, then it is good; they also conjectured 
the converse (henceforth the ``HP conjecture"), and proved it in many special cases. They also proved many useful results, 
including Propositions \ref{solu}, \ref{HPcst} and \ref{ses} below.
\begin{prop}[\cite{HP} Theorem 6]\label{solu}
Any good soluble group possesses a complete mapping.
\end{prop}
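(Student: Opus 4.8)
The plan is to argue by induction on $|G|$, reducing the existence of a complete mapping for $G$ to the same question for a proper normal subgroup together with the associated quotient. The engine is a short exact sequence principle of the kind recorded in Proposition \ref{ses}: if $N\trianglelefteq G$ and both $N$ and $G/N$ possess complete mappings, then so does $G$. Granting this, the two things I must supply are a base case (abelian groups) and, for each good soluble $G$, a way to exhibit $G$ as an extension $1\to N\to G\to G/N\to 1$ in which both $N$ and $G/N$ are again good, so that induction and Proposition \ref{ses} apply.

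For the base case I would first settle good \emph{abelian} groups. Writing $G$ additively, any complete mapping forces $s:=\sum_{g\in G}g$ to satisfy $s=2s$, whence $s=0$; and $s=0$ holds exactly when the $2$-torsion subgroup has rank $\neq 1$, that is, precisely when $G$ is good. Conversely $s=0$ suffices: on a group of odd order the identity is a complete mapping, since $x\mapsto 2x$ is then a bijection, while a general good abelian group splits as (odd part)$\times$($2$-part), and a direct product of two groups that each possess a complete mapping again possesses one, componentwise. The only genuinely constructive point is to produce, by an explicit (Paige-style) argument, a complete mapping of a non-cyclic abelian $2$-group.

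For the inductive step, suppose $G$ is good, soluble and nonabelian. If the largest normal subgroup of odd order $O_{2'}(G)$ is nontrivial, choose a minimal normal subgroup $N$ inside it. Then $N$ is abelian of odd order, hence good and admissible by the base case; moreover $|N|$ odd means $G/N$ has the same Sylow $2$-subgroup as $G$, so $G/N$ is good and smaller, hence admissible by induction. Proposition \ref{ses} then yields a complete mapping of $G$.

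The main obstacle is the remaining case $O_{2'}(G)=1$, where the prime $2$ cannot be peeled off. Here $F(G)=O_2(G)$ is nontrivial and self-centralizing, so the Sylow $2$-subgroup is nontrivial and, by goodness, non-cyclic; but a chief factor isomorphic to $\Z/2\Z$ is itself bad, and such central $\Z/2\Z$ factors now appear unavoidably. For these neither the relevant normal subgroup nor the quotient need be good, so Proposition \ref{ses} does not apply, and one cannot instead reduce through an odd quotient, since an index-$2$ subgroup of a non-cyclic Sylow $2$-subgroup may be cyclic. This case must be handled by a dedicated construction that directly uses the global non-cyclicity of the Sylow $2$-subgroup to push a complete mapping across the bad central $\Z/2\Z$ chief factors by hand. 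I expect this interaction with the prime $2$ to be the technical heart of the proof.
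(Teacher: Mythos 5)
This proposition is quoted from Hall--Paige and the paper does not reprove it, but your proposal has a genuine gap that you yourself flag and then do not close: the case $O_{2'}(G)=1$. Everything difficult about the statement is concentrated there. In particular, that case already contains all non-cyclic $2$-groups, so the ``dedicated construction'' you defer is not a residual technicality but is essentially equivalent to the $2$-group case of the HP conjecture (Hall--Paige's Theorem 5), which requires a genuine inductive construction. Your base case has the same issue in miniature: you reduce good abelian groups to producing a complete mapping of a non-cyclic abelian $2$-group ``by an explicit (Paige-style) argument'' without giving one. Note also that the chief-series strategy cannot be patched just by invoking the extension results in this paper: a central chief factor $N\cong\Z_2$ is bad, so Proposition \ref{ses} is unavailable, and Proposition \ref{Z2ses} requires $G/N$ to possess a complete mapping, which fails whenever $G/N$ is itself bad --- a situation you cannot always avoid by choosing a different minimal normal subgroup when $O_{2'}(G)=1$.

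The actual route (alluded to after Proposition \ref{Z2ses}) avoids chief series entirely. One first proves that every non-cyclic $2$-group admits a complete mapping, by induction on the order: for $G$ a non-cyclic $2$-group with $G\not\cong\Z_2\times\Z_2$ one can choose a central involution $x$ with $G/\{1,x\}$ non-cyclic and apply Proposition \ref{Z2ses} (or follow Hall--Paige's original direct construction). Then, for $G$ good and soluble with Sylow $2$-subgroup $P$, P.~Hall's theorem gives a Hall $2'$-subgroup $K$, so that $G=PK$ and $K$ is simultaneously a left and right transversal of $P$. Since $|K|$ is odd, $\phi=\mathrm{id}$ and $\psi(x)=x^2$ are permutations of $K$ satisfying $x\phi(x)P=\psi(x)P$, and Proposition \ref{HPcst} applied to $H=P$ (which has a complete mapping because it is trivial or non-cyclic) finishes the proof. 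Your reduction for the case $O_{2'}(G)\neq 1$ is correct as far as it goes, but without the $2$-group input and a mechanism like the Hall complement to exploit it, the argument is incomplete.
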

In particular, any group of odd order is soluble \cite{FeitThompson} and therefore possesses a complete mapping (this 
can be easily shown directly by making $\phi$ the identity).

Recall that a left transversal of a subgroup $H\subseteq G$ is a set $X\subseteq G$ such that
\[
  G=\coprod_{x\in X}xH,
\]
and similarly for a right transversal.
\begin{prop}[\cite{HP} Theorem 1]\label{HPcst}
Suppose $H$ is a subgroup of $G$, and $H$ possesses a complete mapping. Suppose $X$ is both a left and right transversal 
for $H$, and $\phi$ and $\psi$ are permutations of $X$ such that
\[
  x\phi(x)H=\psi(x)H
\]
for $x\in X$. Then $G$ possesses a complete mapping.
\end{prop}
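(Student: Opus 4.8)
The plan is to construct a complete mapping of $G$ directly as a permutation $\Theta$ of $G$ for which $g\mapsto g\,\Theta(g)$ is again a permutation; by the reformulation noted in the introduction, producing such a $\Theta$ is equivalent to producing a complete mapping. I would first record the complete mapping of $H$ as a permutation $\rho$ of $H$ for which $h\mapsto h\rho(h)$ is also a permutation. Since $X$ is a left transversal, every $g\in G$ has a unique expression $g=xh$ with $x\in X$ and $h\in H$, and the hypothesis $x\phi(x)H=\psi(x)H$ lets me define $k(x):=\psi(x)^{-1}x\phi(x)\in H$, so that $x\phi(x)=\psi(x)k(x)$. I would then seek $\Theta$ of the shape
\[
  \Theta(xh)=h^{-1}\phi(x)\,G_x(h),
\]
where each $G_x$ is a permutation of $H$, to be chosen.

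The virtue of this shape is that the product map comes out correctly for free, and this is exactly the step consuming the coset hypothesis and the left-transversal property. Indeed
\[
  g\,\Theta(g)=xh\cdot h^{-1}\phi(x)G_x(h)=x\phi(x)G_x(h)=\psi(x)\,k(x)G_x(h),
\]
which lies in the left coset $\psi(x)H$. As $x$ ranges over $X$ the representative $\psi(x)$ runs once through $X$ (as $\psi$ is a permutation), and for each fixed $x$ the map $h\mapsto k(x)G_x(h)$ is a bijection of $H$; since $X$ is a left transversal this already makes $g\mapsto g\,\Theta(g)$ a bijection of $G$, for \emph{any} choice of permutations $G_x$.

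The real work is to choose the $G_x$ so that $\Theta$ itself is a bijection, and this is where I expect the main difficulty, and where the \emph{second} transversal hypothesis is forced into play. The key observation is that, because $X$ is also a right transversal, the assignment $(x,h)\mapsto h^{-1}\phi(x)$ is a bijection $X\times H\to G$: it is $(x,h)\mapsto(h^{-1},\phi(x))$ followed by the right-transversal bijection $H\times X\to G$. Writing $h^{-1}\phi(x)=z\,q$ in left-transversal coordinates ($z\in X$, $q\in H$) thus yields a bijection $(x,h)\mapsto(z,q)$ of $X\times H$, and in these coordinates $\Theta(xh)=z\,\bigl(q\,G_x(h)\bigr)$. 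Hence $\Theta$ is a bijection precisely when, for every fixed $z$, the within-coset map $q\mapsto q\,G_{x(z,q)}\!\bigl(h(z,q)\bigr)$ is a bijection of $H$; that is, when the permutation $R_z(q):=G_{x(z,q)}\!\bigl(h(z,q)\bigr)$ is itself a complete mapping of $H$ for each $z$.

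This reduces everything to one combinatorial problem: choose permutations $\{G_x\}_{x\in X}$ of $H$ so that each $G_x$ is a bijection (needed above for the product) \emph{and} each induced $R_z$ is a complete mapping of $H$ (needed now for $\Theta$). A short counting argument shows the problem is balanced—using that $\{xH\}$ partitions $G$, each target coset is reached by exactly $|H|$ pairs $(x,h)$—so the constraints indexed by $x$ and those indexed by $z$ are linked through the coordinate-switch bijection in a doubly-stochastic fashion. I would then take the complete mapping $\rho$ of $H$ as the fibrewise template for the $R_z$ and patch these templates into a single globally consistent family $\{G_x\}$ by a system-of-distinct-representatives (Hall marriage) argument. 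This simultaneous reconciliation of the two conditions is the crux and the expected obstacle: it is straightforward when $H$ is normal (conjugation preserves $H$, so the two conditions decouple and one explicit choice works), and the dual left-and-right transversal hypothesis is precisely what replaces normality in the general case.
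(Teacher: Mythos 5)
Your setup is sound and is in fact secretly equivalent to the Hall--Paige construction: the ansatz $\Theta(xh)=h^{-1}\phi(x)G_x(h)$, the observation that $g\mapsto g\Theta(g)$ is automatically bijective for \emph{any} permutations $G_x$ (this correctly consumes the coset hypothesis and the left-transversal property), and the change of coordinates $(x,h)\mapsto(z,q)$ via the right-transversal property are all correct. But the proof stops exactly where the theorem lives. You reduce everything to producing a family $\{G_x\}_{x\in X}$ of permutations of $H$ whose ``transposed'' maps $R_z(q)=G_{x(z,q)}(h(z,q))$ make $q\mapsto qR_z(q)$ bijective for every $z$, and then assert that this can be arranged by taking $\rho$ as a ``fibrewise template'' and patching via a system-of-distinct-representatives argument. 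That assertion is not justified and is not an SDR problem in any evident sense: the constraints are not of the form ``pick one element from each of a family of sets,'' but rather ``pick $|X|$ permutations of $H$ so that $|X|$ derived maps, each assembled from values of \emph{different} $G_x$'s at \emph{different} arguments, are all complete-mapping-like.'' Hall's marriage theorem gives no purchase on such a simultaneous system, and the counting remark that the incidence structure is balanced does not by itself produce even one admissible family. As written, the existence of $\{G_x\}$ --- which is the entire content of the proposition --- is left unproved.

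The missing ingredient is an \emph{explicit} choice, and it is exactly what the argument reproduced in Proposition \ref{lcst} supplies. Let $a,b,c:J\to H$ be the complete mapping of $H$. For fixed $x$, use the fact that $X$ is simultaneously a left and right transversal to check that the map sending $j\in J$ to the $H$-part of $\phi(x)a(j)$ in the right-transversal decomposition $\phi(x)a(j)=d\,y'$ ($d\in H$, $y'\in X$) is a bijection $J\to H$; call its inverse $h\mapsto j(x,h)$, and set $G_x(h)=c(j(x,h))$. Each $G_x$ is then a permutation of $H$, and in your $(z,q)$ coordinates one computes $q=a(j)^{-1}$ and $R_z(q)=c\bigl(a^{-1}(q^{-1})\bigr)$ \emph{independently of} $z$, so that $qR_z(q)=a(j)^{-1}c(j)=b(j)$ is a bijection of $q$. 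So no patching or marriage argument is needed; the two families of constraints are reconciled in one stroke by threading the complete mapping of $H$ through the right-transversal decomposition. Without this (or an equivalent) construction, your argument is a correct reduction but not a proof.
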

The following result is a direct corollary of Proposition \ref{HPcst}.
\begin{prop}[\cite{HP} Corollary 2]\label{ses}
Suppose $N$ is a normal subgroup of $G$ such that both $N$ and $H=G/N$ possess complete mappings. Then $G$ possesses a complete mapping.
\end{prop}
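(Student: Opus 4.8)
The plan is to apply Proposition \ref{HPcst} with the subgroup taken to be $N$ itself. Since $N$ possesses a complete mapping by hypothesis, the only ingredients I must supply are a set $X$ that is simultaneously a left and a right transversal for $N$, together with permutations $\phi$ and $\psi$ of $X$ satisfying $x\phi(x)N=\psi(x)N$ for all $x\in X$. The first point is immediate: because $N$ is normal, every left coset $xN$ coincides with the right coset $Nx$, so any set $X$ consisting of one representative from each coset of $N$ is at once a left and a right transversal.

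The second point is where the complete mapping of $H=G/N$ enters. Let $\pi:X\to H$ be the map $x\mapsto xN$; it is a bijection precisely because $X$ meets each coset of $N$ exactly once. Writing the complete mapping of $H$ as a pair of permutations $\alpha,\beta$ of $H$ with $h\,\alpha(h)=\beta(h)$ for every $h\in H$, I would define $\phi=\pi^{-1}\alpha\pi$ and $\psi=\pi^{-1}\beta\pi$. These are permutations of $X$, being composites of bijections, and by construction $\pi(\phi(x))=\alpha(\pi(x))$ and $\pi(\psi(x))=\beta(\pi(x))$.

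It then remains to verify the coset identity. Recall that multiplication in $H=G/N$ is given by $(xN)(yN)=xyN$. Hence $\pi(x)\,\pi(\phi(x))=\pi(x)\,\alpha(\pi(x))=\beta(\pi(x))=\pi(\psi(x))$, the middle equality being the defining relation of the complete mapping of $H$ evaluated at $h=\pi(x)$. Unwinding the definition of $\pi$, the outer terms read $(xN)(\phi(x)N)=x\phi(x)N$ and $\psi(x)N$ respectively, so $x\phi(x)N=\psi(x)N$. With all the hypotheses of Proposition \ref{HPcst} now in place, that proposition yields a complete mapping of $G$.

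I do not anticipate a genuine obstacle here; the argument is essentially bookkeeping. The one subtlety worth flagging is the appeal to normality, which is exactly what guarantees that a single transversal serves both roles and that the coset multiplication appearing in Proposition \ref{HPcst} coincides with multiplication in the quotient $G/N$, allowing the complete mapping of $H$ to be transported to $X$ through the bijection $\pi$.
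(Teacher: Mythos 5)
Your argument is correct and is precisely the derivation the paper intends: it states Proposition \ref{ses} as ``a direct corollary of Proposition \ref{HPcst}'' without further detail, and your proof --- taking the subgroup to be $N$, using normality to make one transversal serve as both a left and right transversal, and transporting the complete mapping of $G/N$ to permutations of the transversal --- is exactly the bookkeeping that justification requires.
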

Recently many groups have been shown to satisfy the conjecture. The Mathieu groups $M_{11}$, $M_{12}$, $M_{22}$, 
$M_{23}$ and $M_{24}$, and some groups of Lie type, have been shown to possess complete mappings \cite{CM1,CM2,CM3,CM4}. Dalla Volta and 
Gavioli have shown that a minimal counterexample to the HP conjecture would have to be almost simple, or contain a central 
involution \cite{volta}. Continuing in this direction, we will show that a minimal counterexample must be one of the 26 sporadic simple 
groups or the Tits group. In a companion paper, Evans \cite{Tony07} deals with 26 of these groups (including an alternative treatment of 
the Mathieu groups), leaving the fourth Janko group as the only possible counterexample. John Bray reports that this group is also not a 
counterexample, thus completing the proof of the HP conjecture.

In Section \ref{sec:simple} we give two versions of Proposition \ref{ses} in which $N$ and $H$ are replaced by $\Z_2$ (see Propositions 
\ref{Z2ses} and \ref{sesZ2} respectively). These are used, along with Proposition \ref{solu}, to reduce the conjecture to simple groups. 

In Section \ref{sec:dbl} we prove a version of Proposition \ref{HPcst} in which the assumption on the cosets of $H$ is weakened 
(see Proposition \ref{lcst}). We also prove versions in which this assumption is replaced by an assumption on the double cosets of $H$, which 
is often easier to check in practice (see Corollaries \ref{dcstperm} and \ref{dcst}).

Finally in Section \ref{sec:lie}, we use the results of Section \ref{sec:dbl} and results about $(B,N)$-pairs to prove that a minimal 
counterexample cannot be a finite simple group of Lie type.

\section{Reduction to Simple Groups}\label{sec:simple}
We start with some well known results.
\begin{lem}\label{badind2}
Suppose $G$ is bad. Then there exists a characteristic subgroup of $G$ of index $2$.
\end{lem}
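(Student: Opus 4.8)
The plan is to exhibit an explicit canonical homomorphism $\epsilon\colon G\to\Z_2$ which is surjective precisely because the Sylow $2$-subgroup is nontrivial and cyclic, and then to argue that its kernel is characteristic. Let $P$ be a Sylow $2$-subgroup of $G$, which by hypothesis is cyclic and nontrivial, say of order $2^n$ with $n\ge 1$; write $|G|=2^n m$ with $m$ odd. Consider the left regular representation $\lambda\colon G\to\mathrm{Sym}(G)$ given by $\lambda_g(x)=gx$, and compose it with the sign homomorphism $\mathrm{sgn}\colon\mathrm{Sym}(G)\to\Z_2$ to obtain the homomorphism $\epsilon=\mathrm{sgn}\circ\lambda$.

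First I would compute $\epsilon$ on a generator $g$ of $P$. Since $g$ has order $2^n$, the orbit of any $x$ under $\lambda_g$ is $\{x,gx,g^2x,\dots\}$, so $\lambda_g$ acts on $G$ with all cycles of length $2^n$, and there are exactly $m=|G|/2^n$ of them. A cycle of even length $2^n$ is an odd permutation, so each cycle contributes $-1$ to the sign; as there are $m$ cycles with $m$ odd, $\epsilon(g)=(-1)^m=-1$. Hence $\epsilon$ is surjective and $K=\ker\epsilon$ is a subgroup of index $2$.

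It remains to show that $K$ is characteristic, which is the only point requiring care. Given $\alpha\in\mathrm{Aut}(G)$, a direct check gives $\alpha\circ\lambda_g\circ\alpha^{-1}=\lambda_{\alpha(g)}$, so $\lambda_{\alpha(g)}$ and $\lambda_g$ are conjugate in $\mathrm{Sym}(G)$ and therefore have equal sign; thus $\epsilon\circ\alpha=\epsilon$. Consequently $\alpha(K)=K$ for every automorphism $\alpha$, so $K$ is the desired characteristic subgroup of index $2$. The main (and essentially only) obstacle is this last invariance argument: surjectivity of $\epsilon$ is a short cycle-counting computation, but one must be careful to use the canonical sign homomorphism rather than an arbitrary index-$2$ subgroup, since a general group may possess several subgroups of index $2$ and only this canonical construction is automatically automorphism-invariant.
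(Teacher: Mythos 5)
Your proof is correct and follows exactly the route the paper indicates (the paper's one-line justification is "consider the inverse image of the alternating group under the regular representation $G\rightarrow S_G$," which is precisely your kernel $K=\ker(\mathrm{sgn}\circ\lambda)$). Your cycle-count showing a generator of the cyclic Sylow $2$-subgroup maps to an odd permutation, and your conjugation argument $\alpha\circ\lambda_g\circ\alpha^{-1}=\lambda_{\alpha(g)}$ establishing that $K$ is characteristic, correctly fill in the details the paper leaves implicit.
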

\begin{cor}\label{badcompl}
Suppose $G$ is bad. Then $G$ contains a characteristic subgroup $N$ of odd order, such that the quotient $G/N$ is a cyclic 2-group. In particular, $G$ is soluble.
\end{cor}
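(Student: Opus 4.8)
The plan is to iterate Lemma \ref{badind2}, peeling off one factor of $2$ at a time, and to use the standard fact that a characteristic subgroup of a characteristic subgroup is again characteristic. I would argue by induction on the order $2^k$ of a Sylow-2 subgroup $S$ of $G$; since $G$ is \emph{bad}, $S$ is nontrivial and cyclic, so $k\geq 1$.

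First I would apply Lemma \ref{badind2} to obtain a characteristic subgroup $G_1\subseteq G$ of index $2$. As a subgroup of the cyclic group $S$, a Sylow-2 subgroup of $G_1$ is cyclic of order $2^{k-1}$. In the base case $k=1$ the group $G_1$ has odd order, and I would simply set $N=G_1$. When $k>1$, the group $G_1$ is again bad, so by the inductive hypothesis it contains a characteristic subgroup $N$ of odd order with $G_1/N$ a cyclic $2$-group; since $N$ is characteristic in $G_1$ and $G_1$ is characteristic in $G$, it follows that $N$ is characteristic in $G$.

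It then remains to check that $G/N$ is a cyclic $2$-group. Because $N$ has odd order and $[G:N]=[G:G_1][G_1:N]=2\cdot 2^{k-1}=2^k$, the quotient $G/N$ is a $2$-group of order $2^k$. To see that it is cyclic, I would observe that the composite $S\hookrightarrow G\twoheadrightarrow G/N$ is injective, since $S\cap N$ has order dividing both $2^k$ and the odd number $|N|$ and is therefore trivial; comparing orders then gives $G/N\cong S$, which is cyclic. Finally, solubility follows immediately: $N$ has odd order and is hence soluble by the Feit--Thompson theorem \cite{FeitThompson}, while $G/N$ is a $2$-group and so soluble, whence $G$ is soluble as an extension of soluble groups. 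The two points needing care are the transitivity of the characteristic property down the chain (which is what guarantees the final $N$ is characteristic in $G$, not merely in $G_1$) and the verification that $G/N$ is genuinely cyclic rather than just a $2$-group; both are handled by the order and intersection computations above.
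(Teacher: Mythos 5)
Your proof is correct and follows essentially the same route as the paper, which simply notes that the corollary "follows from the first [lemma] by induction on $|G|$" --- i.e.\ iterating Lemma \ref{badind2} and using transitivity of the characteristic property, exactly as you do. Your inducting on the order of the Sylow-2 subgroup rather than on $|G|$ is an immaterial difference, and your explicit checks that $N$ is characteristic in $G$ and that $G/N\cong S$ is cyclic are the right details to supply.
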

The first result follows by considering the inverse image of the alternating group under the regular 
representation $G\rightarrow S_G$. The second follows from the first by induction on $|G|$.

To prove the first version of Proposition \ref{ses}, we need the following well known 
combinatorial result, the proof of which is straightforward.
\begin{lem}\label{inv}
Suppose $I$ is a finite set and $S$ and $T$ are involutions on $I$ 
with no fixed points. Then we can write $I$ as a disjoint union
\[
  I=J\amalg K
\]
such that $S(J)=T(J)=K$ (in particular $|K|=|J|=\frac{1}{2}|I|$).
\end{lem}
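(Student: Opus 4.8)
The plan is to recast the statement as a graph two-colouring problem. I would build a multigraph $\Gamma$ on the vertex set $I$ whose edges are the pairs $\{i,S(i)\}$ coming from $S$ together with the pairs $\{i,T(i)\}$ coming from $T$, keeping these two families of edges formally distinct (so that when $S(i)=T(i)$ the two edges run in parallel). Because $S$ and $T$ are fixed-point-free involutions, each family is a perfect matching of $I$, so every vertex of $\Gamma$ meets exactly one $S$-edge and one $T$-edge and hence has degree $2$.

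Next I would analyse the connected components of $\Gamma$. Being connected and $2$-regular, each component is a single cycle, and since the two edges incident to any vertex are one $S$-edge and one $T$-edge, consecutive edges around such a cycle alternate between the two types. An alternating cycle must have even length, which is the key structural point; in the degenerate case $S(i)=T(i)$ this simply records that the corresponding component is a parallel pair of edges, a cycle of length $2$.

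Having shown every component is an even cycle, I would properly two-colour $\Gamma$ with colours in $\{0,1\}$: on each cycle fix a base vertex, colour it $0$, and colour the remaining vertices alternately, which is consistent precisely because the cycle length is even. Let $J$ and $K$ be the sets of vertices coloured $0$ and $1$ respectively. Every $S$-edge and every $T$-edge then joins a vertex of $J$ to a vertex of $K$, so $S(J)\subseteq K$ and $T(J)\subseteq K$; since $S$ and $T$ are bijections interchanging the two complementary colour classes, in fact $S(J)=T(J)=K$. Finally, each even cycle contributes equally to the two colour classes, giving $|J|=|K|=\tfrac12|I|$.

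The only delicate point is the even-length claim for the components, and the cleanest way to secure it is to insist on the multigraph formulation, so that the alternation argument also covers the coincidence $S(i)=T(i)$; beyond that the argument is routine.
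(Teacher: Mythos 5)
Your argument is correct and complete. Note that the paper does not actually supply a proof of this lemma---it is dismissed as ``straightforward''---so there is nothing to compare against; your route (view $S$ and $T$ as two perfect matchings of $I$, observe that their union decomposes into alternating cycles which are necessarily of even length, and take $J$ and $K$ to be the two colour classes of a proper $2$-colouring) is one of the standard ways to fill this gap, and you correctly flag the one delicate point, namely using a multigraph so that the case $S(i)=T(i)$ yields a $2$-cycle rather than a degenerate component. The final step could be stated slightly more explicitly: every edge joins $J$ to $K$, so $S(J)\subseteq K$ and $S(K)\subseteq J$, and since $S(J)\amalg S(K)=I=J\amalg K$ this forces $S(J)=K$ exactly (likewise for $T$); but this is exactly the content of your phrase about interchanging the colour classes, so the proof stands as written.
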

Now we are ready to prove:
\begin{prop}\label{Z2ses}
Suppose that $G$ is a good finite group, and $N$ is a normal subgroup of $G$ isomorphic to $\Z_2$. Suppose $H=G/N$ possesses a complete mapping. Then $G$ possesses a complete mapping.
\end{prop}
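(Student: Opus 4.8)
The plan is to build a complete mapping of $G$ directly from one on $H=G/N$ by lifting through the quotient, using Lemma \ref{inv} to handle the ambiguity that arises because $N\cong\Z_2$. Write $N=\{1,t\}$ where $t$ is the central involution. A complete mapping of $H$ consists of an indexing set $I$ and bijections $a,b,c:I\to H$ with $a(i)b(i)=c(i)$. Each element $h\in H$ corresponds to a coset $\{g,gt\}$ of $N$ in $G$, so lifting $a,b,c$ to $G$ requires choosing, for each $i$, lifts $\tilde a(i),\tilde b(i)$ of $a(i),b(i)$; then $\tilde a(i)\tilde b(i)$ is one of the two lifts of $c(i)$. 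The difficulty is that a complete mapping of $G$ needs $\tilde a,\tilde b,\tilde c$ to be \emph{bijections} onto all of $G$, meaning each fibre $\{g,gt\}$ must be hit exactly once by each of the three maps; an arbitrary choice of lifts will generally double up one element of a fibre and miss the other.

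The key idea is to use $G$ as the indexing set: since $a$ is a bijection $I\to H$, I can reindex so that $I=G$ and $\tilde a=\mathrm{id}_G$, i.e.\ take $a(g)=gN$ and let $\tilde a(g)=g$, which is automatically a bijection. Then I must choose a lift $\tilde b(g)$ of $b(gN)$ for each $g\in G$ so that both $\tilde b$ and $\tilde c:=\tilde a\cdot\tilde b$ are bijections of $G$. Pairing $g$ with $gt$, the two choices of $\tilde b$ on the fibre over a given $h\in H$ differ by multiplication by $t$, and correspondingly $\tilde c(g)=g\tilde b(g)$ flips by $t$ as well. So on each fibre I have a binary choice, and I need these choices made so that $\tilde b$ is a bijection (equivalently, the two preimages $g,gt$ mapping into a common $b$-fibre get sent to the two distinct elements of that fibre) and simultaneously $\tilde c$ is a bijection. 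This is exactly the setup for Lemma \ref{inv}: on the set $I=G$, define an involution $S$ by $S(g)=gt$ (free, since $t\ne 1$), and define a second fixed-point-free involution $T$ encoding the constraint coming from $\tilde c$, namely pairing the two preimages under $g\mapsto \tilde c(g)\bmod N$. I would then apply Lemma \ref{inv} to split $G=J\amalg K$ with $S(J)=T(J)=K$, and make the lift choice ``untwisted'' on $J$ and ``twisted'' on $K$ (i.e.\ choose $\tilde b(g)$ versus $\tilde b(g)t$ according to whether $g\in J$ or $g\in K$).

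I expect the main obstacle to be defining the second involution $T$ correctly and checking it is a genuine fixed-point-free involution before Lemma \ref{inv} can be applied. The $\tilde b$-bijectivity constraint partitions $G$ into fibres of size two under $S$, and the $\tilde c$-bijectivity constraint partitions $G$ into fibres of size two under $T$; the content of Lemma \ref{inv} is precisely that two such free involutions admit a common transversal decomposition $J\amalg K$ that is simultaneously compatible with both, so that one consistent assignment of lifts makes both $\tilde b$ and $\tilde c$ bijective at once. I would verify that with the chosen assignment the equation $\tilde a(g)\tilde b(g)=\tilde c(g)$ holds in $G$ for every $g$ (it holds by construction, since $\tilde c$ was defined as $\tilde a\tilde b$ and the twist by the central $t$ is consistent on both sides), and that $\tilde a,\tilde b,\tilde c$ are all bijections $G\to G$. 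The hypothesis that $G$ is good is not used combinatorially here but guarantees via Corollary \ref{badcompl} and Proposition \ref{solu} that the degenerate cases are already handled; the real work is the involution bookkeeping, which is where I would be most careful to confirm freeness so that $|J|=|K|=\tfrac12|G|$ and the parity matches.
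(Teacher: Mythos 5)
There is a genuine gap, and it sits exactly where you predicted you would need to be careful. Your plan is to take $\tilde a=\mathrm{id}_G$ and then, fibre by fibre over $H$, make a binary choice of lift $\tilde b(g)$ so that both $\tilde b$ and $\tilde c(g)=g\tilde b(g)$ become bijections. But the two constraints you want Lemma \ref{inv} to reconcile are both constraints on the \emph{same} pair $\{g,gt\}$, and they are mutually exclusive. Indeed, since $\phi_H$ is a bijection of $H$, the map $g\mapsto\tilde b(g)N$ has fibres exactly the cosets $\{g,gt\}$, so $\tilde b$ is injective if and only if $\tilde b(gt)=\tilde b(g)t$ on every coset; but then, $t$ being central of order $2$,
\[
\tilde c(gt)=gt\,\tilde b(gt)=gt\,\tilde b(g)t=g\tilde b(g)t^2=\tilde c(g),
\]
so $\tilde c$ is forced to be non-injective. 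The opposite choice $\tilde b(gt)=\tilde b(g)$ repairs $\tilde c$ on that fibre but kills injectivity of $\tilde b$. Moreover your ``second involution'' $T$, pairing the two preimages of $\tilde c(g)\bmod N$, is again $g\mapsto gt$, i.e.\ $T=S$; Lemma \ref{inv} applied to two equal involutions gives no extra freedom and cannot resolve a contradiction that already occurs on a single pair. So the approach with $\tilde a=\mathrm{id}_G$ cannot be completed.

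The paper avoids this by making the two involutions live on the index set $I$ of the complete mapping of $H$ (so they pair up \emph{different} elements of $H$, not the two lifts within one fibre): it chooses an involution $\bar y\in H$ --- this is precisely where goodness is used, since $G$ good forces $|H|$ even, contrary to your remark that the hypothesis plays no combinatorial role --- and sets $S=\bar b^{-1}r_y\bar b$, $T=\bar c^{-1}r_y\bar c$ where $r_y$ is right multiplication by $\bar y$. These are genuinely distinct fixed-point-free involutions in general, Lemma \ref{inv} produces a common splitting $I=J\amalg K$, the lifts $b,c$ are defined on $J$ arbitrarily and extended to $K$ by a twist by a lift $y$ of $\bar y$, one sets $a(i)=c(i)b(i)^{-1}$, and finally the index set for $G$ is taken to be $I\times N$ (not $G$ with $\tilde a=\mathrm{id}$), with the central involution $x$ distributed over the second factor to make all three maps surjective. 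If you want to salvage your write-up, you must abandon $\tilde a=\mathrm{id}_G$ and introduce an involution of $H$ to couple distinct fibres in this way.
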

\begin{proof}
Let $N=\{1,x\}$, so that $x$ is a central involution in $G$. Let $\pi:G\twoheadrightarrow H$ be the natural surjection. Clearly 
if $|H|$ is odd, then $N$ is a Sylow-2 subgroup of $G$, contradicting the goodness of $G$. Thus $|H|$ is even. In particular, 
$H$ contains an involution $\bar y$. Then right multiplication by $\bar y$ gives an involution $r_y$ on $H$ with no fixed points.

Now $H$ admits a complete mapping, so choose an indexing set $I$ and bijections $\bar a,\bar b,\bar c:I\rightarrow H$ such 
that $\bar a(i)\bar b(i)=\bar c(i)$ for $i\in I$. Then $S=\bar b^{-1}r_y\bar b$ and $T=\bar c^{-1}r_y\bar c$ are both involutions 
on $I$ with no fixed points. By Lemma \ref{inv}, we can write
\[
  I=J\amalg K
\]
such that $S(J)=T(J)=K$. Now let $y$ be one of the two elements in $\pi^{-1}(\bar y)$. We lift $\bar b$ and $\bar c$ to $G$ as follows. Let
\[
  b,c:J\rightarrow G
\]
be any maps satisfying $\pi b=\bar b$ and $\pi c=\bar c$. Extend $b$ and $c$ to $K$ by defining
\begin{equation}\label{K}
  b(Sj)=b(j)y\hspace{10mm}\text{and}\hspace{10mm}
  c(Tj)=c(j)y\hspace{10mm}\text{for }j\in J.
\end{equation}
By definition of $S$, we have $\bar b(Si)=\bar b(i)\bar y$ for all $i\in I$. Thus
\[
  \pi(b(Sj))=\pi(b(j)y)=\bar b(j)\bar y=\bar b(Sj)
\]
for $j\in J$. With a similar calculation for $c$, we see that $\pi b=\bar b$ and $\pi c=\bar c$ on all of $I$. Finally define $a(i)=c(i)b(i)^{-1}$. 
Then
\[
  \pi(a(i))=\pi(c(i))\pi(b(i))^{-1}=\bar c(i)\bar b(i)^{-1}=\bar a(i),
\]
so that $\pi a=\bar a$. Now define maps $A,B,C:I\times N\rightarrow G$ by
\[\begin{array}{r@{\;=\;}l@{\hspace{10mm}}r@{\;=\;}l@{\hspace{10mm}}r@{\;=\;}l}
  A(j,1)&a(j),&B(j,1)&b(j),&C(j,1)&c(j),\\
  A(j,x)&a(j)x,&B(j,x)&b(j)yx,&C(j,x)&c(j)y,\\
  A(k,1)&a(k),&B(k,1)&b(k)y^{-1}x,&C(k,1)&c(k)y^{-1}x,\text{ and}\\
  A(k,x)&a(k)x,&B(k,x)&b(k),&C(k,x)&c(k)x
\end{array}\]
for $j\in J$ and $k\in K$. Because $a(i)b(i)=c(i)$ for all $i\in I$, and $x$ is central, it is clear that $A(i,t)B(i,t)=C(i,t)$ for all $(i,t)\in I\times N$. 
It remains to show that $A$, $B$ and $C$ are bijective. Since $|I\times N|=|G|$, it is sufficient to prove surjectivity. Since $\pi b=\bar b$ is a 
bijection, $b(I)$ is a transversal for $N$ in $G$, so that $G=b(I)\amalg b(I)x$. Also (\ref{K}) shows that $b(K)=b(J)y$, so
\begin{eqnarray*}
  B(I\times N)
    &=&B(J\times\{1\})\cup B(J\times\{x\})\cup B(K\times\{1\})\cup B(K\times\{x\})\\
    &=&b(J)\cup b(J)yx\cup b(K)y^{-1}x\cup b(K)\\
    &=&b(J)\cup b(K)x\cup b(J)x\cup b(K)\\
    &=&b(I)\cup b(I)x\\
    &=&G,
\end{eqnarray*}
as required. The calculations for $A$ and $C$ are similar.
\end{proof}
Note that Hall and Paige prove Proposition \ref{solu} by proving the HP conjecture for $2$ groups. The above result allows us to easily reproduce the 
conjecture for $2$ groups by induction on the order of the group; indeed if $G$ is a noncyclic $2$ group and $G\not\cong\Z_2\times\Z_2$, one can always 
find a central involution $x\in G$ such that $G/\{1,x\}$ is noncyclic.

Our second version of Proposition \ref{ses}, which deals with a subgroup $N$ in $G$ of index $2$, was proven in \cite{Tony} under the following technical 
assumption: that there exist elements $a,\,b\in G-N$ such that $xa^ix^{-1}\neq b^j$ for all $x\in N$ and odd integers $i,\,j$. We will show that, provided 
$G$ is good, this assumption always holds.

A theorem of Frobenius states that if $n$ divides the order of a finite group $G$, then the number of solutions of $x^n=1$ in $G$ is divisible by $n$ 
(see \cite{Isaacs} for an elementary proof). We will use the following special case. Recall that a 2-element in $G$ is an element whose order is a 
power of $2$.
\begin{lem}\label{p}
Suppose $G$ is a finite group and $P$ is a Sylow-2 subgroup of $G$. Then the number of 2-elements in $G$ is divisible by $|P|$.
\end{lem}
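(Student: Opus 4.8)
The plan is to identify the set of 2-elements of $G$ with the solution set of a single equation $x^n=1$, and then to invoke the theorem of Frobenius stated just above the lemma, taking $n=|P|$.

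First I would write $|P|=2^a$, so that $2^a$ is the full 2-part of $|G|$; that is, $2^a$ divides $|G|$ but $2^{a+1}$ does not. The crucial observation is then that the 2-elements of $G$ are precisely the solutions of $x^{2^a}=1$. Indeed, if $g$ is a 2-element then its order is a power of $2$; by Lagrange's theorem this order divides $|G|$, and being a power of $2$ it cannot exceed $2^a$, so it divides $2^a$ and hence $g^{2^a}=1$. Conversely, if $g^{2^a}=1$ then the order of $g$ divides $2^a$ and is therefore itself a power of $2$, so that $g$ is a 2-element. This establishes an equality of sets between the 2-elements of $G$ and the solutions of $x^{2^a}=1$.

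Having matched these two sets, the number of 2-elements in $G$ equals the number of solutions of $x^{n}=1$ with $n=2^a=|P|$. Since $P$ is a subgroup of $G$, the quantity $n=|P|$ divides $|G|$, so the theorem of Frobenius applies directly and shows that the number of solutions of $x^n=1$ is divisible by $n=|P|$. This is exactly the assertion of the lemma.

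The argument is short, and I anticipate no genuine obstacle. The only step that warrants care is the identification of the 2-elements with the solutions of $x^{|P|}=1$, which rests on the elementary fact that the order of a 2-element, being a power of $2$ dividing $|G|$, cannot exceed the 2-part $|P|$ of the group order; once this is in hand, the conclusion is an immediate consequence of Frobenius's theorem.
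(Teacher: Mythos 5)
Your proof is correct and is exactly the argument the paper intends: the lemma is presented there as an immediate special case of Frobenius's theorem, obtained by identifying the 2-elements with the solutions of $x^{|P|}=1$ precisely as you do. No issues.
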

The following result is well known and can be found, for instance, in Theorem 4.2.1 of \cite{Hall}.
\begin{lem}\label{NPH}
Suppose $P$ is a finite 2-group and $H\subset P$ is a proper subgroup. Then the normaliser $N_P(H)$ is strictly larger than $H$.
\end{lem}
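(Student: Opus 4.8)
The plan is to prove the lemma by induction on $|P|$, using the fact that a nontrivial finite $2$-group has a nontrivial centre. To establish the latter I would run the usual class equation argument: each nontrivial conjugacy class of $P$ has size a power of $2$ greater than $1$, hence divisible by $2$, and since $|P|$ is divisible by $2$ the number of central elements is too; as $1\in Z(P)$ this forces $|Z(P)|>1$. Write $Z=Z(P)$.

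I would then split into two cases. If $Z\not\subseteq H$, choose any $z\in Z\setminus H$; being central, $z$ normalises every subgroup, so $z\in N_P(H)\setminus H$ and we are done with no appeal to induction (this also covers the base of the induction). In the remaining case $Z\subseteq H$, I would pass to $\bar P=P/Z$, a $2$-group of strictly smaller order, and put $\bar H=H/Z$, which is a proper subgroup of $\bar P$ because $H$ is proper in $P$. The inductive hypothesis gives $N_{\bar P}(\bar H)\supsetneq\bar H$, and it remains only to transport this conclusion back up to $P$.

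The one delicate point is the identification $N_{\bar P}(\bar H)=N_P(H)/Z$, which I expect to be the sole step requiring care. It follows from the subgroup correspondence together with the centrality of $Z$: for $g\in P$ the coset $gZ$ normalises $\bar H=H/Z$ iff $(gHg^{-1})/Z=H/Z$, and since $Z\subseteq H\cap gHg^{-1}$ and the correspondence between subgroups of $P$ containing $Z$ and subgroups of $\bar P$ is injective, this holds iff $gHg^{-1}=H$, i.e.\ iff $g\in N_P(H)$. Granting this, $N_P(H)/Z=N_{\bar P}(\bar H)\supsetneq H/Z$ yields $N_P(H)\supsetneq H$, completing the induction. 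If one prefers to avoid the quotient bookkeeping, the same result drops out of the upper central series $1=Z_0\le Z_1\le\dots\le Z_n=P$: taking the least $i$ with $Z_i\not\subseteq H$, any $z\in Z_i\setminus H$ satisfies $[z,h]\in Z_{i-1}\subseteq H$ for all $h\in H$, which forces $z$ to normalise $H$, so again $z\in N_P(H)\setminus H$.
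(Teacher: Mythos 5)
Your proof is correct. Note, however, that the paper does not actually prove this lemma: it simply records it as well known and cites Theorem 4.2.1 of Hall's \emph{The Theory of Groups}, so there is no internal argument to compare against. Both of your arguments are standard and sound: the induction on $|P|$ via the nontrivial centre $Z(P)$ (with the case split on whether $Z(P)\subseteq H$, and the correspondence-theorem identification $N_{P/Z}(H/Z)=N_P(H)/Z$, which you rightly flag as the only point needing care) and the cleaner upper-central-series variant, where the least $i$ with $Z_i\not\subseteq H$ produces an element of $N_P(H)\setminus H$ directly. Either one is a perfectly acceptable substitute for the citation; the second avoids quotient bookkeeping at the cost of invoking nilpotence of finite $2$-groups, which itself rests on the same nontrivial-centre fact.
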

\begin{lem}
Suppose $G$ is a good finite group and $N\subseteq G$ is a normal subgroup of index $2$. Consider the cyclic subgroups generated by 2-elements in the set complement $G-N$. These subgroups are not all conjugate in $G$.
\end{lem}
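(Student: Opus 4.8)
The plan is to argue by contradiction: I would assume that all the cyclic subgroups generated by $2$-elements of $G-N$ are conjugate in $G$, and derive a contradiction by counting $2$-elements of $G-N$ modulo a power of $2$, playing a crude lower bound from Frobenius against an upper bound coming from normaliser growth in a Sylow-$2$ subgroup.

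First I would record the $2$-local structure. Since $N$ has index $2$, the order $|G|$ is even, so the goodness of $G$ forces the Sylow-$2$ subgroup $P$ to be noncyclic; write $|P|=2^a$, so $a\ge 2$. An order computation shows $P\not\subseteq N$ and that $Q:=P\cap N$ is a Sylow-$2$ subgroup of $N$ of order $2^{a-1}$. In particular $P\setminus Q$ is nonempty and consists of $2$-elements of $G-N$, so the collection in the statement is nonempty.

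Next I would set up the count. Under the contradiction hypothesis every cyclic subgroup $\langle z\rangle$, with $z$ a $2$-element of $G-N$, is conjugate to one fixed such subgroup $C$, so they all share a common order $2^m$ with $1\le m\le a$, and I may take $C\le P$. Because $N$ is normal, conjugation preserves the complement $G-N$, so the set $\mathcal S$ of these subgroups is exactly the conjugacy class of $C$, of size $[G:N_G(C)]$. The key bookkeeping observation is that a $2$-element of $G-N$ lies in such a $\langle z\rangle$ precisely when it is one of its $2^{m-1}$ generators, and every generator of such a group again lies in $G-N$ (an odd power of an element outside an index-$2$ subgroup stays outside it). Hence the number $t$ of $2$-elements of $G-N$ satisfies $t=2^{m-1}[G:N_G(C)]$.

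Then I would bring in the two supplied lemmas. Applying Lemma \ref{p} to $G$ and to $N$ shows that $2^a$ divides the number of $2$-elements of $G$ and $2^{a-1}$ divides the number in $N$; subtracting gives $2^{a-1}\mid t$. On the other hand, if $m=a$ then $C=P$ is cyclic, contradicting goodness, so $m<a$ and $C$ is a proper subgroup of $P$. Lemma \ref{NPH} then gives $N_P(C)\supsetneq C$, so the $2$-part of $|N_G(C)|$ is at least $2^{m+1}$ and the $2$-part of $[G:N_G(C)]$ is at most $2^{a-m-1}$. Combining this with $t=2^{m-1}[G:N_G(C)]$ bounds the $2$-part of $t$ by $2^{a-2}$, contradicting $2^{a-1}\mid t$ since $a\ge 2$. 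The main obstacle is this middle step: correctly identifying $\mathcal S$ with a single conjugacy class and pinning down the exact fibre size $2^{m-1}$, so that the Frobenius lower bound on $t$ can be set against the normaliser-growth upper bound from Lemma \ref{NPH}; once the common factor $2^{m-1}$ is cancelled, the gap between $2^{a-1}$ and $2^{a-2}$ closes the argument.
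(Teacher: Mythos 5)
Your proposal is correct and follows essentially the same route as the paper: partition the $2$-elements of $G-N$ into the generator sets of the conjugate cyclic subgroups to get $t=2^{m-1}[G:N_G(C)]$, apply Lemma \ref{p} to both $G$ and $N$ to get $2^{a-1}\mid t$, and use Lemma \ref{NPH} to bound the $2$-part of $[G:N_G(C)]$. The only (immaterial) difference is that the paper deduces $N_P(H)=H$ from the divisibility and then contradicts goodness via Lemma \ref{NPH}, whereas you invoke Lemma \ref{NPH} first and land the contradiction in the $2$-adic count.
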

\begin{proof}
Let $X$ be the set of 2-elements in $G-N$. Let
\[
  Y=\{\langle x\rangle\mid x\in X\}
\]
be the set of cyclic groups generated by elements of $X$. For any $x\in X$, we have
\[
  \langle x\rangle-N=\{x^k\mid k\in\Z\text{ odd}\}.
\]
Since $x$ is a 2-element, if $k$ is odd then $x^{kl}=x$ for some $l\in\Z$. Thus $\langle x\rangle$ is generated by any element of $\langle x\rangle-N$. It follows that if $H,H'\in Y$ are distinct, then $(H-N)\cap(H'-N)=\emptyset$. Thus
\[
  X=\coprod_{H'\in Y}(H'-N).
\]
Let $P\subseteq G$ be a Sylow-2 subgroup of $G$. Then $P$ is not contained in $N$, so choose $x\in P-N\subseteq X$, and let $H=\langle x\rangle\in Y$. Suppose by way of contradiction that every $H'\in Y$ is conjugate to $H$. Then the orbit stabiliser theorem gives
\[
  |Y|=\frac{|G|}{|N_G(H)|}.
\]
Also each $H'\in Y$ has the same order as $H$, so
\[
  |X|=\sum_{H'\in Y}|H'-N|
    =\sum_{H'\in Y}\frac{|H'|}{2}
    =\frac{1}{2}|H|\cdot|Y|
    =\frac{|H|\cdot|G|}{2|N_G(H)|}.
\]
Now Lemma \ref{p} shows that $|P|/2$ divides the number of 2-elements in $N$, and the number of 2-elements in $G$. Thus it divides $|X|$, so that
\[
  \frac{|G|}{|P|\cdot[N_P(H):H]}
    =\frac{|H|\cdot|G|}{|N_G(H)|}\cdot\frac{1}{|P|}
      \cdot\frac{|N_G(H)|}{|N_P(H)|}
    =\frac{2|X|}{|P|}\cdot[N_G(H):N_P(H)]\in\Z.
\]
Now $|G|/|P|$ is odd and $[N_P(H):H]$ is a power of $2$, so $[N_P(H):H]=1$. That is, $H=N_P(H)$. By Lemma \ref{NPH}, we must have $P=H$. But $H$ is cyclic, contradicting the goodness of $G$.
\end{proof}
We can now prove the second version of Proposition \ref{ses}:
\begin{prop}\label{sesZ2}
Suppose that $G$ is a good finite group, and $N$ is a normal subgroup of $G$ such that $N$ possesses a complete mapping and $G/N\cong\Z_2$. Then $G$ possesses a complete mapping.
\end{prop}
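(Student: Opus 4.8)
The plan is not to construct a complete mapping directly but to reduce to the theorem of Evans \cite{Tony}, which already proves this exact statement under the technical hypothesis recalled above: that there exist $a,b\in G-N$ with $xa^ix^{-1}\neq b^j$ for all $x\in N$ and all odd integers $i,j$. Consequently the entire task is to verify, using only that $G$ is good, that such a pair $a,b$ exists. It is worth noting why one cannot instead invoke Proposition \ref{HPcst} directly: a transversal of the index-$2$ subgroup $N$ has just two elements $\{1,a\}$, and a short case check on the two permutations of a two-element set shows that no admissible pair $\phi,\psi$ exists, so the coset condition of Proposition \ref{HPcst} can never be met here. This is exactly what forces the more delicate argument of \cite{Tony}.

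First I would apply the lemma immediately preceding this proposition. Since $G$ is good and $N$ is normal of index $2$, the cyclic subgroups generated by $2$-elements of $G-N$ are not all conjugate in $G$; I therefore fix two $2$-elements $a,b\in G-N$ for which $\langle a\rangle$ and $\langle b\rangle$ are not conjugate in $G$. I claim this choice witnesses the hypothesis of \cite{Tony}.

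The crux is to pass from the non-conjugacy of the cyclic subgroups to the required element-wise inequality, and this is where the assumption that $a,b$ are $2$-elements does the work. Suppose for contradiction that $xa^ix^{-1}=b^j$ for some $x\in N$ and odd $i,j$. Because $a$ has $2$-power order and $i$ is odd, $a^i$ generates $\langle a\rangle$, and likewise $b^j$ generates $\langle b\rangle$; this is the same observation used in the preceding lemma, that a cyclic group generated by a $2$-element is generated by any of its elements outside $N$. Then
\[
  x\langle a\rangle x^{-1}=x\langle a^i\rangle x^{-1}=\langle xa^ix^{-1}\rangle=\langle b^j\rangle=\langle b\rangle,
\]
so $\langle a\rangle$ and $\langle b\rangle$ are conjugate in $G$, contradicting the choice of $a,b$. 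Hence the technical hypothesis holds and \cite{Tony} gives the desired complete mapping of $G$.

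I expect the only real obstacle to be recognising that the preceding conjugacy lemma has been engineered precisely to supply the generators-outside-$N$ needed by \cite{Tony}; once the odd-power generation of $2$-elements is in hand the reduction is immediate, with no further computation required.
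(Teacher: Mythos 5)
Your proposal is correct and follows essentially the same route as the paper: both invoke the preceding lemma to obtain $2$-elements $a,b\in G-N$ generating non-conjugate cyclic subgroups, use the fact that odd powers of a $2$-element generate the same cyclic group to verify the technical hypothesis of \cite{Tony}, and then cite that theorem. The only difference is cosmetic (the paper concludes that $a^i$ and $b^j$ are non-conjugate in all of $G$, which is a fortiori the condition for $x\in N$), so nothing further is needed.
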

\begin{proof}
By the previous lemma, we can find 2-elements $a$ and $b$ in $G-N$, such that $\langle a\rangle$ and $\langle b\rangle$ are not conjugate in $G$. For any odd 
integers $i$ and $j$, we have $\langle a^i\rangle=\langle a\rangle$ and $\langle b^j\rangle=\langle b\rangle$. Therefore $\langle a^i\rangle$ is not conjugate to 
$\langle b^j\rangle$, so in particular, $a^i$ and $b^j$ are not conjugate in $G$. The result now follows by Theorem 11 of \cite{Tony}.
\end{proof}
We can now reduce the Hall Paige conjecture to simple groups. The idea of taking a minimal counterexample to the conjecture and considering a minimal normal subgroup is due to Dalla Volta and Gavioli \cite{volta}.
\begin{thm}\label{simple}
Suppose $G$ is a minimal counterexample to the HP conjecture. That is, $G$ is good but has no complete mapping, and any good group smaller than $G$ 
has a complete mapping. Then $G$ is simple.
\end{thm}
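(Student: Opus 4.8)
The plan is to suppose, for contradiction, that $G$ is not simple, and then to manufacture a complete mapping of $G$ out of complete mappings of a proper normal subgroup and the corresponding quotient. The guiding tool is Proposition~\ref{ses}: if a normal subgroup $N$ and the quotient $G/N$ both possess complete mappings, then so does $G$. Since $G$ is a \emph{minimal} counterexample, every \emph{good} group strictly smaller than $G$ already has a complete mapping; the entire difficulty is to arrange that both $N$ and $G/N$ are good, so that minimality applies, and to treat separately the situations in which one of them is bad.

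First I would record two preliminary reductions. Because $G$ is good, if $G$ were soluble then Proposition~\ref{solu} would furnish a complete mapping; hence $G$ must be insoluble. Next I claim that $G$ has no normal subgroup of index $2$: if $G_0\trianglelefteq G$ with $[G:G_0]=2$, then either $G_0$ is good, in which case minimality supplies a complete mapping of $G_0$ and Proposition~\ref{sesZ2} (with $G/G_0\cong\Z_2$) yields one for $G$; or $G_0$ is bad, in which case Corollary~\ref{badcompl} makes $G_0$ soluble, forcing $G$ soluble and contradicting insolubility. Either way we obtain a contradiction, so no such $G_0$ exists. Combining this with Lemma~\ref{badind2} gives the consequence I really want: no quotient of $G$ by a nontrivial normal subgroup can be bad, for a bad quotient would have a characteristic subgroup of index $2$ whose preimage is a normal subgroup of index $2$ in $G$.

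Now I would take a minimal normal subgroup $N$, which may be chosen proper and nontrivial since $G$ is not simple. Being minimal normal, $N$ is characteristically simple, hence a direct product of isomorphic simple groups. If $N$ is bad, then Corollary~\ref{badcompl} forces $N$ soluble, so its simple factors are cyclic of prime order and $N$ is elementary abelian; badness then pins it down to $N\cong\Z_2$, and since $G/N$ is good by the reduction above, minimality gives $G/N$ a complete mapping and Proposition~\ref{Z2ses} produces one for $G$, a contradiction. If instead $N$ is good, minimality gives $N$ a complete mapping; and $G/N$ is good (again by the reduction), so minimality gives $G/N$ a complete mapping as well, whence Proposition~\ref{ses} produces a complete mapping of $G$, once more a contradiction.

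The main obstacle is exactly the bookkeeping around badness: Proposition~\ref{ses} is useless on its own if either $N$ or $G/N$ fails to be good, because minimality only delivers complete mappings for good groups. The two observations that unlock everything are that a bad minimal normal subgroup is forced to be $\Z_2$ (so Proposition~\ref{Z2ses} applies) and that a bad quotient would hand us a normal subgroup of index $2$ (excluded by the reduction built from Proposition~\ref{sesZ2} and solubility). Once these are established, every case collapses into a contradiction, so $G$ must be simple.
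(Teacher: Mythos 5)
Your proof is correct and follows essentially the same route as the paper: pass to a minimal nontrivial normal subgroup $N$ and combine Propositions \ref{solu}, \ref{ses}, \ref{Z2ses} and \ref{sesZ2} with Lemma \ref{badind2} and Corollary \ref{badcompl} according to whether $N$ and $G/N$ are good or bad. The only differences are organizational: you front-load the facts that $G$ is insoluble and has no normal subgroup of index $2$, whereas the paper runs a direct four-case analysis, and you identify $N\cong\Z_2$ in the bad case via characteristic simplicity rather than via Lemma \ref{badind2} and the minimality of $N$ --- but every step corresponds to a move the paper makes.
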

\begin{proof}
Suppose otherwise, and let $N$ be a minimal nontrivial normal subgroup of $G$. There are four cases to consider. Suppose first that $N$ and $G/N$ are both good. By the minimality of $G$, they must satisfy the HP conjecture. Thus they both possess complete mappings. Proposition 
\ref{ses} now shows that $G$ possesses a complete mapping.

Next suppose $N$ and $G/N$ are both bad. They are both soluble by Corollary \ref{badcompl}. Thus $G$ is soluble and good, so it possesses a complete mapping by Proposition \ref{solu}.

Now suppose $N$ is good and $G/N$ is bad. If $|N|$ is odd, then $N$ is soluble, and $G$ possesses a complete mapping just as in the last case. Suppose $|N|$ is even. By Lemma \ref{badind2}, there is a characteristic subgroup $\bar H$ of $G/N$ of index $2$. The inverse image $H$ of $\bar H$ is a normal subgroup of $G$ of index $2$ containing $N$. Because the Sylow-2 subgroup of $N$ is noncyclic, the same is true of $H$. Thus $H$ is good, so it possesses a complete mapping by minimality. It follows from Proposition \ref{sesZ2} that $G$ possesses a complete mapping.

Finally suppose $N$ is bad and $G/N$ is good. By Lemma \ref{badind2}, we have a characteristic subgroup $H$ of $N$ of index $2$. Because $H$ is characteristic in $N$, it is normal in $G$. But $N$ is a minimal nontrivial normal subgroup of $G$, so $H$ is trivial. That is, $N\cong\Z_2$. Again $G/N$ possesses a complete mapping by minimality of $G$, so it follows from Proposition \ref{Z2ses} that $G$ possesses a complete mapping.
\end{proof}
\section{Double Coset Results}\label{sec:dbl}
In this section we prove some results similar to Proposition \ref{HPcst}, in which a complete mapping of a subgroup is extended to a complete 
mapping of the group. We will use the following result about transversals, which is an immediate corollary of Theorem 5.1.6 of \cite{HallBook} (see the proof 
of \cite{HallBook} Theorem 5.1.7).
\begin{lem}\label{lrcst}
Suppose $H$ and $K$ are subgroups of $G$ with the same order. Then there exists a left transversal for $H$ which is also a transversal for $K$ (either 
left or right, as desired).
\end{lem}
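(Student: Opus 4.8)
The plan is to recast the problem as finding a perfect matching in a bipartite graph and then invoke Hall's marriage theorem. Since $|H|=|K|$, the indices agree: $[G:H]=|G|/|H|=|G|/|K|=[G:K]$; call this common value $n$. Let the left cosets of $H$ be $g_1H,\ldots,g_nH$ and the right cosets of $K$ be $Kh_1,\ldots,Kh_n$ (for the ``right transversal'' conclusion; for the ``left transversal'' conclusion I would instead take the left cosets $h_1K,\ldots,h_nK$ and argue identically). Form a bipartite graph on these two families, joining $g_iH$ to $Kh_j$ exactly when $g_iH\cap Kh_j\neq\emptyset$. A perfect matching in this graph is a bijection $\sigma$ with $g_iH\cap Kh_{\sigma(i)}\neq\emptyset$ for every $i$; choosing any $x_i$ in each of these nonempty intersections then produces the desired set $X=\{x_1,\ldots,x_n\}$. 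Indeed, each $x_i\in g_iH$ and the $g_iH$ are distinct, so $X$ is a left transversal for $H$; and each $x_i\in Kh_{\sigma(i)}$ with $\sigma$ a bijection, so $X$ meets each right coset of $K$ exactly once.

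The heart of the argument is verifying Hall's condition, which I would do by a double-counting argument on the intersection sizes. For a left coset $g_iH$ and a right coset $Kh_j$, set $m_{ij}=|g_iH\cap Kh_j|$. Because the right cosets of $K$ partition $G$, summing over $j$ gives $\sum_j m_{ij}=|g_iH|=|H|$; symmetrically, $\sum_i m_{ij}=|Kh_j|=|K|=|H|$. Thus every row and every column of the matrix $(m_{ij})$ has the same sum $|H|$. Now let $S$ be any set of left cosets of $H$, and let $N(S)$ denote the set of right cosets of $K$ adjacent to some member of $S$. All of the mass $\sum_{i\in S}\sum_j m_{ij}=|S|\,|H|$ lives in the columns indexed by $N(S)$, since $m_{ij}=0$ whenever $g_iH\in S$ and $Kh_j\notin N(S)$; but those columns carry total mass at most $|N(S)|\,|H|$. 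Hence $|S|\,|H|\le|N(S)|\,|H|$, giving $|S|\le|N(S)|$. This is precisely Hall's condition, so a perfect matching exists and the proof concludes as above.

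I expect the only real obstacle to be the bookkeeping that disentangles left versus right cosets correctly, so that the chosen set is genuinely a transversal on \emph{both} sides simultaneously; the combinatorial core reduces cleanly to Hall's theorem once the doubly balanced matrix $(m_{ij})$ is in hand. Equivalently, one could divide $(m_{ij})$ by $|H|$ to obtain a doubly stochastic matrix and cite Birkhoff--von Neumann to extract a permutation matrix supported on nonzero entries, which yields the same matching without an explicit appeal to Hall's condition.
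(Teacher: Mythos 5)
Your proof is correct, and it is essentially the argument behind the paper's citation: the paper derives the lemma from Theorem 5.1.6 of Hall's \emph{Combinatorial Theory}, which is precisely the common-system-of-representatives theorem for two partitions of a finite set into equally many blocks of equal size, proved there by the same reduction to P.~Hall's marriage theorem via the doubly balanced intersection matrix. Your verification of Hall's condition by double counting the $m_{ij}$ is exactly the standard argument, so there is nothing to add.
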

In fact the proof of Proposition \ref{HPcst} given in \cite{HP} is valid under weaker hypotheses; namely, the elements $x$, $\phi(x)$ and $\psi(x)$ may run 
through three different left transversals of $H$ as $x$ varies, and only one need also be a right transversal. Using this observation and the previous Lemma, 
we may now prove:
\begin{prop}\label{lcst}
Suppose $H\subseteq G$ is a subgroup of $G$ which admits a complete mapping. Suppose we have bijections $\T x$, $\T y$ and $\T z$ from an indexing set $I$ to $G/H$ 
(so that $|I|=[G:H]$), and suppose $\T x(i)\T y(i)\supseteq \T z(i)$ for all $i\in I$. Then $G$ possesses a complete mapping.
\end{prop}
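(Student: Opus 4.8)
The plan is to deduce this from the strengthening of Proposition \ref{HPcst} noted just above the statement: it suffices to produce three left transversals of $H$, presented as the images of bijections $p,q,r\colon I\to G$ (each inducing a bijection $I\to G/H$), such that $p(i)q(i)H=r(i)H$ for all $i$ and such that one of the three is also a right transversal. So the whole task becomes a matter of choosing representatives of the prescribed cosets $\T x(i),\T y(i),\T z(i)$, together with a suitable correction, so that these hypotheses hold. I would take $p(i)$ to be a representative of $\T x(i)$ and $r(i)$ a representative of $\T z(i)$, and build the middle transversal $q$ directly out of the hypothesis $\T z(i)\subseteq\T x(i)\T y(i)$.

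First I would record the effect of the hypothesis on representatives. For any choice of representatives, $\T z(i)\subseteq\T x(i)\T y(i)$ says that the single left coset $r(i)H=\T z(i)$ is one of the left cosets making up the product $\T x(i)\T y(i)$. Equivalently, $p(i)^{-1}r(i)H$ is a left coset contained in the double coset $D_i$ that contains $\T y(i)$ (this double coset is independent of the chosen representatives). Moreover, replacing $p(i)$ by another representative $p(i)h$ replaces $p(i)^{-1}r(i)H$ by $h^{-1}p(i)^{-1}r(i)H$, so as the representative of $\T x(i)$ ranges over $\T x(i)$ the coset $p(i)^{-1}r(i)H$ ranges over \emph{precisely} the left cosets contained in $D_i$. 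I would then set $q(i)H:=p(i)^{-1}r(i)H$, which forces $p(i)q(i)H=r(i)H$ automatically; the one thing still to be arranged is that these cosets $q(i)H$ run over all of $G/H$, so that $q$ is a transversal.

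The key step is a counting argument inside each double coset. Since $\T y$ is a bijection onto $G/H$, for each double coset $D$ the number of indices $i$ with $\T y(i)\subseteq D$ equals the number of left cosets contained in $D$, namely $|D|/|H|$. For each such $i$ every left coset of $D$ is an achievable value of $p(i)^{-1}r(i)H$, by the previous paragraph, so I may choose, independently on each double coset, a bijection between these indices and these cosets; this pins down the representative $p(i)$ and the target coset $q(i)H$ for every $i$, and globally the cosets $q(i)H$ then cover $G/H$ exactly once. Choosing any representatives $q(i)$ in these cosets makes $q$ a left transversal, while $\{p(i)\}$ and $\{r(i)\}$ remain left transversals because their cosets were never altered.

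Finally I would arrange the right transversal condition and conclude. The coset $q(i)H=p(i)^{-1}r(i)H$ is unchanged if the representative $r(i)$ is replaced by $r(i)h$, so the representatives of $\T z(i)$ are still entirely free; by Lemma \ref{lrcst} applied with $K=H$ I may choose them so that $\{r(i)\}$ is simultaneously a left and a right transversal. Applying the strengthened form of Proposition \ref{HPcst} to $p,q,r$ then yields a complete mapping of $G$. (Should the cited argument instead require that a different one of the three transversals be the right one, the symmetry $(a,b,c)\mapsto(c^{-1},a,b^{-1})$ of complete mappings, which interchanges left and right transversals, lets one move the requirement onto $r$.) I expect the counting step to be the main obstacle: one must see that the representative-freedom in $\T x$ moves the correction coset over exactly the left cosets of a single double coset, and that the bijection $\T y$ makes the relevant cardinalities agree so that $q$ can be turned into a genuine transversal.
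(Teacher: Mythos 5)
Your overall strategy is the paper's: produce three left transversals $p,q,r$ with $p(i)q(i)H=r(i)H$, one of them two\nobreakdash-sided, and feed them into the strengthened form of Proposition \ref{HPcst}. Your construction of the middle cosets is correct but heavier than it needs to be: since $\T x(i)\T y(i)=\bigcup_{x\in\T x(i)}x\T y(i)$ is a union of left cosets and $\T z(i)$ is a single left coset, one has $\T z(i)=x_i\T y(i)$ for some $x_i\in\T x(i)$, so you can simply take $q(i)H=p(i)^{-1}r(i)H=\T y(i)$; bijectivity of $i\mapsto q(i)H$ is then immediate from bijectivity of $\T y$, and the double-coset counting (which is correct as far as it goes) becomes unnecessary.

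The genuine problem is the last step: \emph{which} of the three transversals must also be a right transversal. In the Hall--Paige argument the two-sided one must be the \emph{middle} transversal $\{q(i)\}$: its right-transversal property is what lets one write $q(i)a(j)=d(i,j)\,q(r(i,j))$ with $d(i,j)\in H$, and it is used again to prove injectivity of the resulting map $B$. You instead arrange for $\{r(i)\}$ to be two-sided, and your fallback via the conjugate $(a,b,c)\mapsto(c^{-1},a,b^{-1})$ does not rescue this: applied to $p(i)q(i)H=r(i)H$ it yields a relation among $\{r(i)^{-1}\}$, $\{p(i)\}$, $\{q(i)^{-1}\}$ in which the first and third families are \emph{right} transversals and the coset condition has become a right-coset condition, so the hypothesis of the (mirrored) strengthened Proposition \ref{HPcst} is still not met --- the two-sidedness again lands on an outer factor rather than the middle one, and the same happens for the other conjugates. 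Fortunately the repair lies entirely inside your own construction: the relation $p(i)q(i)H=r(i)H$ holds for \emph{every} representative $q(i)$ of the coset $p(i)^{-1}r(i)H$, and these cosets exhaust $G/H$, so Lemma \ref{lrcst} should be applied to choose the $q(i)$ (not the $r(i)$) so that $\{q(i)\}$ is simultaneously a left and right transversal. With that one change your argument matches the paper's.
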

\begin{proof}
By Lemma \ref{lrcst}, we can find a set $\{y_i\mid i\in I\}$ which is both a left and right transversal of $H$. We can label these elements so 
that $y_i\in\T y(i)$. For a given $i\in I$, we have
\[
  \T z(i)\subseteq \T x(i)\T y(i)=\bigcup_{x\in\T x(i)}x\T y(i).
\]
Now each $x\T y(i)$ is a left coset of $H$, and we know distinct left cosets are disjoint, so $\T z(i)=x_i\T y(i)$ for some $x_i\in\T x(i)$. Define 
$z_i=x_iy_i\in x_i\T y(i)=\T z(i)$, so $\{x_i\}$ and $\{z_i\}$ are both left transversals of $H$.

As noted above, the remainder of the proof follows \cite{HP}. Since $H$ possesses a complete mapping, we can choose an indexing set $J$ and 
bijections $a$, $b$ and $c$ from $J$ to $H$ such that $a(j)b(j)=c(j)$ for $j\in J$. Because $\{y_i\}$ is a right transversal of $H$, for any 
$(i,j)\in I\times J$ we can write
\begin{equation}\label{dij}
  y_ia(j)=d(i,j)y_{r(i,j)}
\end{equation}
for some $d(i,j)\in H$ and $r(i,j)\in I$. Then
\[
  z_ic(j)=x_iy_ia(j)b(j)=x_id(i,j)y_{r(i,j)}b(j).
\]
Thus $C(i,j)=A(i,j)B(i,j)$, where the maps $A$, $B$ and $C$ from $I\times J$ to $G$ are defined by
\[
  A(i,j)=x_id(i,j),\hspace{10mm}
  B(i,j)=y_{r(i,j)}b(j),\hspace{10mm}
  C(i,j)=z_ia(j).
\]
It remains to show that these maps are bijective. Since $|I\times J|=|G|$, it suffices to prove injectivity. Suppose $A(i,j)=A(i',j')$, so that 
$x_id(i,j)=x_{i'}d(i',j')$. Now the $x_i$ form a left transversal for $H$ and $d(i,j),\;d(i',j')\in H$, so $i=i'$ and $d(i,j)=d(i',j')$. Thus (\ref{dij}) 
gives
\[
  y_{r(i,j)}a(j)^{-1}=d(i,j)^{-1}y_i=d(i',j')^{-1}y_{i'}=y_{r(i',j')}a(j')^{-1}.
\]
Now the $y_i$ also from a left transversal for $H$, so $a(j)^{-1}=a(j')^{-1}$. Since $a$ is a bijection, this gives $j=j'$, as required.

Now assume $B(i,j)=B(i',j')$, so that $y_{r(i,j)}b(j)=y_{r(i',j')}b(j')$. Since the $y_i$ form a left transversal for $H$, we have $r(i,j)=r(i',j')$ 
and $b(j)=b(j')$. Hence $j=j'$. Now (\ref{dij}) gives
\[
  d(i,j)^{-1}y_i=y_{r(i,j)}a(j)^{-1}=y_{r(i',j')}a(j')^{-1}=d(i',j')^{-1}y_{i'}.
\]
Since the $y_i$ form a right transversal for $H$, we conclude that $i=i'$, as required. The injectivity of $C$ is straightforward.
\end{proof}
The above result is similar to Proposition \ref{ses}; although we are no longer considering a normal subgroup, we require a ``complete mapping" of 
sorts on $G/H$. In fact we have more freedom when $H$ is not normal, since the ``product" of two left cosets $aH$ and $bH$ can be any left coset 
contained in $aHbH$; in general $|aHbH|>|H|$, so there will be more than one choice. The expression $aHbH$ motivates us to consider double cosets; 
recall that a double coset of $H\subseteq G$ is a set of the form $HxH$, for some $x\in G$. We denote the set of double cosets by $\HGH{H}$.
\begin{cor}\label{dcstperm}
Suppose $H\subseteq G$ is a subgroup of $G$ which admits a complete mapping. Suppose $\phi$ and $\psi$ are permutations of $\HGH{H}$ such that 
for each $D\in\HGH{H}$, we have $|D|=|\phi(D)|=|\psi(D)|$ and $D\phi(D)\supseteq\psi(D)$. Then $G$ possesses a complete mapping.
\end{cor}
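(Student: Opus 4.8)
The plan is to deduce this from Proposition \ref{lcst} by taking the indexing set $I$ to be $G/H$ itself, setting $\tilde z$ equal to the identity map, and building $\tilde x$ and $\tilde y$ one double coset at a time. For a double coset $D\in\HGH{H}$ write $L(D)\subseteq G/H$ for the set of left cosets of $H$ contained in $D$, so that $G/H=\coprod_D L(D)$ and $|L(D)|=|D|/|H|$. Each left coset $L$ lies in a unique double coset $F$, and since $\psi$ is a permutation there is a unique $D$ with $\psi(D)=F$; writing $E=\phi(D)$, the hypotheses give $F\subseteq DE$ together with $|L(D)|=|L(E)|=|L(F)|$. I would arrange that $\tilde x$ and $\tilde y$ carry $L(F)$ bijectively onto $L(D)$ and $L(E)$ respectively. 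Because $\psi$ and $\phi\circ\psi^{-1}$ are permutations of $\HGH{H}$, letting $F$ range over all double cosets then makes $\tilde x$ and $\tilde y$ bijections onto $\coprod_D L(D)=G/H$, as $\tilde z=\mathrm{id}$ visibly is.

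The observation that makes this feasible is that for a left coset $X=xH$ and any left coset $Y=yH$ contained in a fixed double coset $E$, the product $XY=xHyH$ depends only on $X$ and on $E=HyH$, not on $Y$ itself; this is straightforward to check, since replacing $y$ by $h_1yh_2$ with $h_1,h_2\in H$ leaves $xHyH$ unchanged. Consequently, once I have secured $\tilde x(L)\cdot E\supseteq L$ for a left coset $L\subseteq F$, the containment $\tilde x(L)\tilde y(L)\supseteq L$ holds for \emph{every} choice of $\tilde y(L)\subseteq E$. This decouples the two maps: on $L(F)$ the map $\tilde y$ may be taken to be an arbitrary bijection onto $L(E)$, and the only genuine task is to produce a bijection $\xi\colon L(F)\to L(D)$ with $L\subseteq\xi(L)\cdot E$ for all $L\in L(F)$.

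The main obstacle is therefore this last matching problem, which I expect to settle with Hall's theorem. Consider the bipartite graph on $L(F)\sqcup L(D)$ joining $L$ to $X$ whenever $L\subseteq X\cdot E$. Left multiplication by $H$ permutes $G/H$, preserves each of $L(F)$ and $L(D)$, and preserves the relation $L\subseteq X\cdot E$; since $H$ acts transitively on the left cosets inside a single double coset, this action is transitive on each side, so the graph is regular. Moreover, writing any $L\in L(F)$ as $L=hf_0H$ with $f_0=d_0e_0$ for fixed $d_0\in D$, $e_0\in E$ (possible as $F\subseteq DE$) exhibits the neighbour $X=hd_0H$, since $hd_0e_0\in hd_0He_0H=X\cdot E$; hence the degree is positive. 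A regular bipartite graph of positive degree has a perfect matching, and this matching furnishes $\xi$.

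Finally I would assemble these per-double-coset maps into $\tilde x$, $\tilde y$ and $\tilde z=\mathrm{id}$ on all of $G/H$, verify using the equal-cardinality bookkeeping that each of the three is a genuine bijection onto $G/H$, note that $\tilde x(i)\tilde y(i)\supseteq\tilde z(i)$ for every $i$ by the decoupling above, and conclude by Proposition \ref{lcst} that $G$ possesses a complete mapping. The two points requiring care are the well-definedness of the product $X\cdot E$ and the cardinality counts ensuring bijectivity; the regularity of the bipartite graph, which comes for free from the $H$-action, is what lets Hall's condition be verified with no computation.
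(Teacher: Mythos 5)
Your proposal is correct, and while it follows the paper's overall strategy of reducing to Proposition \ref{lcst} by constructing the three bijections one double coset at a time, the mechanism at the crucial step is genuinely different. The paper picks $z_D\in\psi(D)\subseteq D\phi(D)$, factors it as $z_D=x_Dy_D$ with $x_D\in D$ and $y_D\in\phi(D)$, uses $|D|=|H|^2/|H\cap x_DHx_D^{-1}|$ together with the hypothesis $|D|=|\phi(D)|=|\psi(D)|$ to see that the three stabilisers $H\cap x_DHx_D^{-1}$, $H\cap y_DHy_D^{-1}$, $H\cap z_DHz_D^{-1}$ have equal order, and then invokes Lemma \ref{lrcst} to produce a single set $X_D\subseteq H$ that is simultaneously a left transversal for the first and third of these; the enumerations $hx_DH$ and $hz_DH$ of $D$ and $\psi(D)$ are then matched in lockstep, while $\phi(D)$ is enumerated by an arbitrary bijection $\mu_D$ --- exactly the ``decoupling'' of $\T y$ that you observe via the fact that $xH\cdot yH$ depends only on $xH$ and $HyH$. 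You instead take $\T z=\mathrm{id}$ and solve the one remaining matching problem $L\subseteq\xi(L)\cdot E$ by Hall's marriage theorem on the coset incidence graph, which is biregular because the left $H$-action is transitive on the left cosets inside each double coset, and has equal degrees on the two sides precisely because $|L(F)|=|L(D)|$ (a double count you should state explicitly, since transitivity alone gives only biregularity). The two arguments are close cousins: Lemma \ref{lrcst} is itself a common-transversal statement proved by the marriage theorem, so the paper has outsourced the combinatorics to a lemma it already needs for Proposition \ref{lcst}, whereas you run the marriage argument directly. Your route buys a cleaner isolation of what must actually be matched and makes $\T z$ trivial; the paper's version buys explicit formulas for all three bijections once the common transversal is chosen, and reuses existing machinery. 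Both use the cardinality hypothesis in the same place, namely to equalise the two sides of the matching.
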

\begin{proof}
Fix $D\in\HGH{H}$ and pick some $z_D\in\psi(D)\subseteq D\phi(D)$, so that $z_D=x_Dy_D$ for some $x_D\in D$ and $y_D\in\phi(D)$. It is well known that
\[
  D=Hx_DH=\coprod_{h\in X}hx_DH
\]
for any left transversal $X$ of $H\cap x_DHx_D^{-1}$ in $H$. In particular,
\[
  |D|=\frac{|H|^2}{|H\cap x_DHx_D^{-1}|}.
\]
Now $|D|=|\phi(D)|=|\psi(D)|$, so $|H\cap x_DHx_D^{-1}|=|H\cap y_DHy_D^{-1}|=|H\cap z_DHz_D^{-1}|$. By Lemma \ref{lrcst}, we can find 
a subset $X_D\subseteq H$ which is simultaneously a left transversal for $H\cap x_DHx_D^{-1}$ and for $H\cap z_DHz_D^{-1}$ in $H$. Thus
\[
  D=\coprod_{h\in X_D}hx_DH
  \hspace{10mm}\text{and}\hspace{10mm}
  \psi(D)=\coprod_{h\in X_D}hz_DH.
\]
Also let $Y_D$ be a left transversal for $H\cap y_DHy_D^{-1}$ in $H$, so that
\[
  \phi(D)=\coprod_{h\in Y_D}hy_DH.
\]
Now $|X_D|=|Y_D|$, so choose a bijection $\mu_D:X_D\rightarrow Y_D$. Let
\[
  I=\{(D,h)\mid D\in\HGH{H}\text{ and }h\in X_D\}.
\]
Define maps $\T x$, $\T y$ and $\T z:I\rightarrow G/H$ by
\[
  \T x(D,h)=hx_DH,
  \hspace{10mm}
  \T y(D,h)=\mu_D(h)y_DH
  \hspace{5mm}\text{and}\hspace{5mm}
  \T z(D,h)=hz_DH.
\]
Certainly since $\mu_D(h)^{-1}\in H$ for $h\in X_D$, we have
\[
  \T z(D,h)=hz_DH=hx_Dy_DH\subseteq hx_DH\mu_D(h)y_DH=\T x(D,h)\T y(D,h).
\]
Thus the statement will follow from Proposition \ref{lcst}, provided $\T x$, $\T y$ and $\T z$ are bijections. Equivalently, $G$ should be a disjoint 
union of $\T x(D,h)$ for $(D,h)\in I$, and similarly for $\T y$ and $\T z$. Since $\phi$ and $\psi$ are permutations, and $\mu_D$ is a bijection,
\begin{eqnarray*}
  G&=&\coprod_{D\in\HGH{H}}D
    =\coprod_{D\in\HGH{H}\atop h\in X_D}hx_DH
    =\coprod_{(D,h)\in I}\T x(D,h),\\
  G&=&\coprod_{D\in\HGH{H}}\phi(D)
    =\coprod_{D\in\HGH{H}\atop h\in Y_D}hy_DH\\
    &=&\coprod_{D\in\HGH{H}\atop h\in X_D}\mu_D(h)y_DH
    =\coprod_{(D,h)\in I}\T y(D,h)\text{ and}\\
  G&=&\coprod_{D\in\HGH{H}}\psi(D)
    =\coprod_{D\in\HGH{H}\atop h\in X_D}hz_DH
    =\coprod_{(D,h)\in I}\T z(D,h).
\end{eqnarray*}
\end{proof}
We will use the special case in which $\phi$ and $\psi$ are the identity:
\begin{cor}\label{dcst}
Suppose $H\subseteq G$ is a subgroup of $G$ which admits a complete mapping. Suppose $D^2\supseteq D$ for all $D\in\HGH{H}$. Then $G$ possesses a 
complete mapping.
\end{cor}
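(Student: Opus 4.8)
The plan is to derive Corollary \ref{dcst} as the special case of Corollary \ref{dcstperm} in which both permutations $\phi$ and $\psi$ of $\HGH{H}$ are taken to be the identity map. To apply Corollary \ref{dcstperm}, I must verify its three hypotheses for every double coset $D\in\HGH{H}$: that $|D|=|\phi(D)|=|\psi(D)|$ and that $D\phi(D)\supseteq\psi(D)$. Once these are checked, the conclusion that $G$ possesses a complete mapping is immediate.

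First I would observe that with $\phi=\psi=\mathrm{id}$, the size condition becomes $|D|=|D|=|D|$, which holds trivially. The containment condition $D\phi(D)\supseteq\psi(D)$ becomes $D\cdot D\supseteq D$, that is, $D^2\supseteq D$, which is exactly the hypothesis we are given. Thus both hypotheses of Corollary \ref{dcstperm} are satisfied. Since $H$ admits a complete mapping by assumption, Corollary \ref{dcstperm} applies directly.

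The only genuinely substantive point is confirming that the identity map is a valid choice of permutation of the finite set $\HGH{H}$, which is immediate since every set admits its identity permutation. In this degenerate case there is essentially no obstacle to overcome: the work has already been done in proving Corollary \ref{dcstperm}, and this corollary merely records the most convenient specialisation for later use in Section \ref{sec:lie}. I would therefore write the proof as a single sentence invoking Corollary \ref{dcstperm} with $\phi$ and $\psi$ equal to the identity.
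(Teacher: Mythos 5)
Your proposal is correct and is exactly the paper's argument: the paper introduces Corollary \ref{dcst} as ``the special case in which $\phi$ and $\psi$ are the identity,'' and the hypotheses of Corollary \ref{dcstperm} reduce trivially to the given condition $D^2\supseteq D$ just as you describe. Nothing further is needed.
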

\section{The HP Conjecture for Groups of Lie Type}\label{sec:lie}
In this section we suppose that $G$ is a finite simple group of Lie type, excluding the Tits group. We begin by stating a number of properties 
of such groups, which can be found in \cite{Carter}. First we recall below the list of families of such groups.

\begin{center}
\begin{tabular}{l*{2}{@{\quad}l}}
	Group&Parameter values&Rank $l$\\ \hline
	$A_k(q)$&$k\geq1$ and $q$ a prime power&$k$\\
	$B_k(q)$&$k\geq2$ and $q$ a prime power&$k$\\
	$C_k(q)$&$k\geq3$ and $q$ a prime power&$k$\\
	$D_k(q)$&$k\geq4$ and $q$ a prime power&$k$\\
	$E_6(q)$&$q$ a prime power&$6$\\
	$E_7(q)$&$q$ a prime power&$7$\\
	$E_8(q)$&$q$ a prime power&$8$\\
	$F_4(q)$&$q$ a prime power&$4$\\
	$G_2(q)$&$q$ a prime power&$2$\\
	${}^2A_k(q)$&$k\geq2$ and $q$ a prime power squared&$\lceil k/2\rceil$\\
	${}^2D_k(q)$&$k\geq4$ and $q$ a prime power squared&$k-1$\\
	${}^3D_4(q)$&$q$ a prime power cubed&$2$\\
	${}^2E_6(q)$&$q$ a prime power squared&$4$\\
	${}^2B_2(q)$&$q=2^{2k+1}$&$1$\\
	${}^2F_4(q)$&$q=2^{2k+1}$&$2$\\
	${}^2G_2(q)$&$q=3^{2k+1}$&$1$\\ \hline
\end{tabular}

Table 1: Families of Groups of Lie Type
\end{center}

% ------------------------------- Untwisted groups
Consider first the \emph{untwisted groups}, namely those with no superscript on the left. Let $\Phi=\Phi^+\cup\Phi^-$ denote the corresponding 
root system, and let $K$ denote the field of $q$ elements. The group $G$ is generated by elements $x_r(t)$ for $r\in\Phi$ and $t\in K$, which 
satisfy $x_r(t)x_r(u)=x_r(t+u)$. If $r,\,s\in\Phi$ are linearly independent, then $x_s(u)$ and $x_r(t)$ satisfy Chevalley's commutator formula 
(\cite{Carter} Theorem 5.2.2):
\begin{equation}\label{Chev}
	x_r(t)x_s(u)=x_s(u)\left[\prod_{i,j>0,\atop ir+js\in\Phi}x_{ir+js}(C_{ijrs}t^iu^j)\right]x_r(t),
\end{equation}
where the product is taken in order of increasing $i+j$, and the integer constants $C_{ijrs}$ are determined by
\[\begin{array}{r@{\,=\,}l@{\hspace{10mm}}r@{\,=\,}l}
	C_{i1rs}&M_{rsi};&C_{1jrs}&(-1)^jM_{srj};\\
	C_{32rs}&\frac{1}{3}M_{r+s,r,2};&C_{23rs}&-\frac{2}{3}M_{s+r,s,2};\\
	M_{rsi}&\frac{1}{i!}\prod_{j=0}^{i-1}N_{r,jr+s};&N_{r,s}&\pm\max\{p+1\mid s-pr\in\Phi\}.
\end{array}\]
The signs of $N_{r,s}$ are chosen to satisfy certain conditions that do not concern us here (\cite{Carter} Sections 4.2). We 
will only use (\ref{Chev}) for $G_2$, in which case we use the values for $N_{r,s}$ at the end of Section 12.4 of \cite{Carter}.

For $r\in\Phi$ we have a homomorphism $\rho_r:SL_2(K)\rightarrow G$ satisfying
\[
	\rho_r\begin{pmatrix}1&t\\0&1\end{pmatrix}=x_r(t)
	\hspace{10mm}\text{and}\hspace{10mm}
	\rho_r\begin{pmatrix}1&0\\t&1\end{pmatrix}=x_{-r}(t).
\]
Let
\[
	h_r(t)=\rho_r\begin{pmatrix}
		t&0\\
		0&t^{-1}
	\end{pmatrix}
	\hspace{10mm}\text{and}\hspace{10mm}
	n_r=\rho_r\begin{pmatrix}
		0&1\\
		-1&0
	\end{pmatrix},
\]
for $t\in K^*$. Clearly $n_r^2=h_r(-1)$. Let $H$, $U$, $V$, $B$ and $N$ be the subgroups of $G$ generated by:
\begin{eqnarray*}
	H&=&\langle h_r(t)\mid r\in\Phi,\,t\in K^*\rangle\subseteq G,\\
	U&=&\langle x_r(t)\mid r\in\Phi^+,\,t\in K\rangle\subseteq G,\\
	V&=&\langle x_r(t)\mid r\in\Phi^-,\,t\in K\rangle\subseteq G,\\
	B&=&\langle H,U\rangle\subseteq G,\\
	N&=&\langle h_r(t),\,n_r\mid r\in\Phi,\,t\in K^*\rangle\subseteq G.
\end{eqnarray*}
We will need an explicit description of the subgroup $H$. Let $\Lambda$ denote the lattice spanned by $\Phi$; this is a free abelian group 
whose rank is the rank $l$ in Table 1.  Let $\Lambda^*$ denote the dual lattice to $\Lambda$. The Cartan matrix of $\Phi$ gives a bilinear 
form on $\Lambda$ (not necessarily symmetric) which allows us to identify $\Lambda$ with a subgroup of $\Lambda^*$. Then $H$ can 
be identified with the image of $\Hom(\Lambda^*,K^*)$ in $\Hom(\Lambda,K^*)$ (see \cite{Carter} Section 7.1). Explicitly, $h_r(t)$ 
corresponds to the function
\[
	h_r(t)(v)=t^{\frac{2(r,v)}{(r,r)}}\text{ for }v\in\Lambda,
\]
where $(\cdot,\cdot)$ is a symmetrised version of the above bilinear form. In particular, if $r,\,s,\,ir+js\in\Phi$ and $t\in K^*$, then
\begin{equation}\label{hr+s}
	h_{ir+js}(t)^{(ir+js,ir+js)}=h_r(t)^{i(r,r)}h_s(t)^{j(s,s)}.
\end{equation}
We have a left exact sequence
\begin{equation}\label{les}
	\Hom(\Lambda^*/\Lambda,K^*)\hookrightarrow\Hom(\Lambda^*,K^*)\rightarrow\Hom(\Lambda,K^*),
\end{equation}
so $H\cong\hat{H}/H_1$, where $\hat{H}=\Hom(\Lambda^*,K^*)\cong(K^*)^{\oplus l}$ and $H_1=\Hom(\Lambda^*/\Lambda,K^*)$. The group 
$\Lambda^*/\Lambda$ is given by the following table (see Section 8.6 of \cite{Carter}):
\[\begin{array}{r|*{10}{@{\quad}l}}
	G&A_k&B_k&C_k&D_{2k+1}&D_{2k}&G_2&F_4&E_6&E_7&E_8\\
	\Lambda^*/\Lambda&\Z_{k+1}&\Z_2&\Z_2&\Z_4&\Z_2\oplus\Z_2&0&0&\Z_3&\Z_2&0
\end{array}\]
In particular, $\Lambda^*/\Lambda$ is generated by at most two elements in the case of $D_{2k+1}$, and is cyclic otherwise. The same statement 
is true of $H_1$, since $K^*$ is cyclic.

% ----------------------------Twisted groups
Now suppose $G$ is a \emph{twisted group}, that is, one with a superscript on the left. Let $\bar G$ be the corresponding untwisted group 
obtained by removing the superscript, with subgroups $\bar H$, $\bU$, $\bV$, $\bar B$ and $\bar N$ as constructed above. There is an 
automorphism $\sigma$ of $\bar G$ (described in \cite{Carter} Section 13.4) fixing all of these subgroups. Let $U=\bU^\sigma$ and 
$V=\bV^\sigma$ be the groups of $\sigma$-invariant elements in $\bU$ and $\bV$. Then $G$ is defined to be the subgroup of $\bar G$ 
generated by $U$ and $V$. Let$H=\bar H\cap G$, and similarly for $B$ and $N$. Again we will need to describe $H$ more explicitly. First 
suppose that $\bar G$ is simply laced; that is, $G$ is not ${}^2B_2(q)$, ${}^2F_4(q)$ or ${}^2G_2(q)$. Recall $\bar H$ is the image of the 
second map in (\ref{les}). The action of $\sigma$ on $\bar H$ can be extended to $\Hom(\Lambda,K^*)$ by
\[
	\sigma(\chi)(r)=\chi(\tau r)^\theta,
\]
where $\tau$ is an isometry of $\Lambda$ fixing the set of simple roots $\Pi\subseteq\Phi$, and $\theta$ is an automorphism of $K$ 
(see \cite{Carter} Lemma 13.7.1). Similarly $\sigma$ acts on $\Hom(\Lambda^*,K^*)$ and $\Hom(\Lambda^*/\Lambda,K^*)$, and we 
obtain a left exact sequence
\[
	\Hom(\Lambda^*/\Lambda,K^*)^\sigma\hookrightarrow\Hom(\Lambda^*,K^*)^\sigma\rightarrow\Hom(\Lambda,K^*)^\sigma.
\]
In fact $H$ is the image of the second map (\cite{Carter} Theorem 13.7.2), so we can again write $H$ as a quotient
\[
	H\cong\hat{H}/H_1,
\]
now with $\hat{H}=\Hom(\Lambda^*,K^*)^\sigma$ and $H_1=\Hom(\Lambda^*/\Lambda,K^*)^\sigma$. Since $\tau$ permutes the set $\Pi^*$, which 
freely generates $\Lambda^*$  as an abelian group, we have
\[
	\hat{H}\cong\bigoplus_{O\in\tau\backslash\Pi^*}(K^*)^{\theta^{|O|}},
\]
where $\tau\backslash\Pi^*$ is the set of orbits of $\tau$ in $\Pi^*$, and $(K^*)^{\theta^{|O|}}$ is the subgroup of $K^*$ fixed by 
$\theta^{|O|}$. The number of orbits $l=|\tau\backslash\Pi^*|$ is exactly the rank listed in Table 1. If $\bar G$ is not of type $D_{2k}$, 
then $\Hom(\Lambda^*/\Lambda,K^*)$ is cyclic as noted above, so the same is true of the subgroup $H_1$. In fact $H_1$ is trivial 
when $G$ has type ${}^3D_4(q)$, and is trivial or $\Z_2$ when $G$ has type ${}^2D_{2k}(q)$ (see the note after \cite{Carter} Lemma 14.1.2).

Finally suppose $G$ is of type ${}^2B_2(q)$, ${}^2F_4(q)$ or ${}^2G_2(q)$. Let $p$ be the characteristic of $K$. For ${}^2B_2(q)$ we have 
$\Lambda^*/\Lambda\cong\Z_2$ and $p=2$, and otherwise $\Lambda^*/\Lambda=0$, so
\[
	\Hom(\Lambda^*/\Lambda,K^*)=\Ext^1(\Lambda^*/\Lambda,K^*)=0
\]
in either case. Hence
\[
	\bar H=\Hom(\Lambda^*,K^*)=\Hom(\Lambda,K^*).
\]
We again have a permutation $\tau$ of $\Pi$; this no longer induces an isometry as the roots of $\Pi$ have different lengths. Nevertheless 
we have the following explicit description of $H$ (\cite{Carter} Theorem 13.7.4):
\begin{equation}\label{Htwisted}
	H=\{\chi\in\bar H\mid\chi(r)=\chi(\tau r)^{(r,r)\theta}\text{ for }r\in\Pi\},
\end{equation}
where $q=p\theta^2$ and $(\cdot,\cdot)$ is normalized to give short roots length $1$. Also $\tau$ acts on $\Pi$ as an involution switching 
long and short roots, and $l=|\tau\backslash\Pi|$ is the rank listed in Table 1. It follows that
\[
	H\cong(K^*)^{\oplus l}.
\]
In this case we set $\hat H=H$ and $H_1=0$.
%-----------------------Common properties

We now state some results applicable to every group $G$ in Table 1. In each case we have written $H$ as a quotient $\hat H/H_1$, where 
$\hat H$ is a product of $l$ cyclic groups, each of order $p^s-1$ for some $s$, where $p$ is the characteristic of $K$. The subgroups 
$U$, $V$, $B$, $H$ and $N$ in $G$, constructed above, satisfy $H=B\cap N$ and $B=H\ltimes U$. It is shown in \cite{Carter} Sections 8.6 
and 14.1 that
\begin{equation}\label{|U|}
	|U|=p^N\text{ for some integer }N\text{, and }p\nmid[G:U].
\end{equation}

The subgroups $B$ and $N$ form a $(B,N)$-pair in $G$. The following results about $(B,N)$-pairs are proved in \cite{bourbaki}. Firstly $H$ 
is normal in $N$, and the quotient $W=N/H$ is a Coxeter group, generated by a set $S$ of involutions. In fact $l=|S|$ is the rank listed in 
Table 1. Moreover if $G$ is untwisted, we may take $S$ to be the image of $\{n_r\mid r\in\Pi\}$, and $W$ is the Coxeter group with the same 
Dynkin diagram as $G$. If $G$ is twisted, $W$ is either dihedral or of type $A_1$, $B_l$ or $F_4$. The double cosets of $B$ in $G$ are 
indexed by $W$, so that
\[
  G=\coprod_{w\in W}BwB.
\]
Here, by abuse of notation, we use $BwB$ to denote $B\bar wB$ for any $\bar w\in N$ which maps to $w\in W=N/H$. In fact we can say more in 
the case of groups of Lie type. For each $w\in W$, choose $n_w\in N$ mapping to $w$. Then every element of $G$ has a unique expression of 
the form
\begin{equation}\label{normalform}
	u'n_whu,
\end{equation}
where $w\in W$, $u\in U$, $h\in H$ and $u'$ is in a subgroup $U_w^-$ of $U$ (\cite{Carter} Corollary 8.4.4 and Proposition 13.5.3). 
We will not use any properties of $U_w^-$, except that $U_1^-$ is trivial and $U_w^-=U$ when $w$ is the longest element of $W$.

Let $\ell:W\rightarrow\Z_{\geq0}$ denote the usual length function. The product of two double cosets of $B$ in $G$ is determined by the formula
\begin{equation}\label{BsBBwB}
	(BsB)(BwB)=\begin{cases}
			BswB&\text{if }\ell(sw)>\ell(w),\\
			BswB\cup BwB&\text{if }\ell(sw)<\ell(w),
		\end{cases}\hspace{5mm}\text{for }s\in S\text{ and }w\in W.
\end{equation}
A subgroup $W'$ of $W$ is called a \emph{parabolic subgroup} if it is generated by $W'\cap S$. In this case each double coset in 
$\HGH[W]{W'}$ contains a unique element of minimal length; these elements are the \emph{minimal double coset representatives} for 
$W'$ in $W$. Also the subset
\[
  P=\coprod_{w\in W'}BwB
\]
is a subgroup of $G$, also called a parabolic subgroup. There is a natural correspondence between $\HGH[W]{W'}$ and $\HGH{P}$. Explicitly, 
every double coset in $\HGH{P}$ can be written as
\begin{equation}\label{PaP}
	PaP=\coprod_{w\in W'vW'}BwB
\end{equation}
for some $W'vW'\in\HGH[W]{W'}$.

Now suppose $r\in S$, and let $W'\subseteq W$ be the parabolic subgroup generated by $S-\{r\}$. Let $P$ be the corresponding parabolic subgroup 
of $G$ as above. Our aim is to apply Corollary \ref{dcst} to $P\subseteq G$. We first prove:
\begin{lem}\label{Pgood}
We can choose $r$ so that $P$ is good, except when $G$ has type ${}^2G_2$, ${}^2A_2$ or $A_1$, and $q$ is odd and $|H|$ is even. Moreover 
any $r$ will suffice, except for types $B_2$ and ${}^2A_3$.
\end{lem}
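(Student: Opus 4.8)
The plan is to compute a Sylow-2 subgroup of $P$ explicitly, splitting on the parity of the characteristic $p$ of $K$. First suppose $p=2$. By (\ref{|U|}) the subgroup $U\subseteq B\subseteq P$ has order $p^N$ and $p\nmid[G:U]$; since $[P:U]$ divides $[G:U]$ it is odd, so $U$ is already a Sylow-2 subgroup of $P$, independently of $r$. It then suffices to note that $U$ is non-cyclic for every simple $G$ in Table 1: in rank at least $2$ it contains two distinct root subgroups and is non-abelian or has $2$-rank at least $2$, while in the rank-1 families the only way $U$ could be cyclic is $U\cong\Z_2$, forcing $G=A_1(2)$, which is excluded as it is not simple. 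Hence for $p=2$ every $r$ makes $P$ good.

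Now let $p$ be odd, so $U$ and the unipotent radical of $P$ have odd order and a Sylow-2 subgroup of $P$ equals one of its Levi factor $L=\langle H,\,x_s(t),\,x_{-s}(t):s\in S\setminus\{r\}\rangle$. The main tool is the torus $H\subseteq L$. Writing $H\cong\hat H/H_1$ with $\hat H$ a product of $l$ cyclic groups, each of even order since $p$ is odd, the $2$-rank of $\hat H$ is exactly $l$, so the $2$-rank of $H$ is at least $l$ minus the $2$-rank of $H_1$. The table of $\Lambda^*/\Lambda$ shows $H_1$ has $2$-rank at most $2$, and equal to $2$ only for base type $D_{2k}$, where $l=2k\geq4$; otherwise $H_1$ is cyclic. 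Thus the $2$-rank of $H$ is at least $2$ whenever $l\geq3$, and also whenever $l=2$ with $H_1$ of odd order. In all of these cases $H$, and hence $P$, has non-cyclic Sylow-2 subgroup, so $P$ is good for \emph{any} $r$.

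It remains to treat the cases where this bound degrades. For $l=1$ the Levi is the Borel $B=H\ltimes U$ with $H$ cyclic, so the Sylow-2 subgroup of $P=B$ is cyclic and $P$ is good exactly when $|H|$ is odd; as there is no alternative node, this yields precisely the genuine exceptions $A_1$, ${}^2A_2$ and ${}^2G_2$ with $q$ odd and $|H|$ even. For $l=2$, the table shows $H_1$ has even order only for $B_2$ (where $\Lambda^*/\Lambda\cong\Z_2$) and ${}^2A_3$ (where $\Lambda^*/\Lambda\cong\Z_4$); for all other rank-2 families $\Lambda^*/\Lambda$ is trivial or odd and the torus already gives any $r$. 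For $B_2$ and ${}^2A_3$ the $2$-rank of $H$ can drop to $1$, so I would instead use the rank-1 subgroup $M_s=\rho_s(SL_2(K))\subseteq L$ attached to the surviving node $s$: it is isomorphic to $SL_2$ or $PSL_2$ over a field of odd order and hence has non-cyclic (generalized quaternion or dihedral) Sylow-2 subgroup. Choosing $r$ to exhibit such a factor shows that $P$ is good, which is why only some $r$ is claimed for these two types.

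The step I expect to be the main obstacle is the bookkeeping for the twisted groups needed to justify the dichotomy uniformly: confirming that each cyclic factor of $\hat H\cong\bigoplus_{O}(K^*)^{\theta^{|O|}}$ really has even order for odd $p$, and determining the $2$-rank of the $\sigma$-fixed subgroup $H_1=\Hom(\Lambda^*/\Lambda,K^*)^\sigma$ family by family, so that ``$2$-rank of $H\geq2$ except for $B_2$ and ${}^2A_3$'' holds without a long case check. Identifying the rank-1 Levi factor $M_s$ and its field of definition in the twisted case ${}^2A_3$, and checking that field has odd order, is the remaining delicate point.
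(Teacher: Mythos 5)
Your strategy coincides with the paper's for the bulk of the lemma: for $q$ even you reduce to $U\subseteq B\subseteq P$ being a noncyclic Sylow-$2$ subgroup, and for $q$ odd you bound the $2$-rank of $H=\hat H/H_1$ from below by $l$ minus the $2$-rank of $H_1$ (your rank inequality is exactly the paper's count $\#(\hat H)\le\#(H)\#(H_1)$ of involutions), which disposes of everything except $l=1$ and the two rank-$2$ types $B_2$ and ${}^2A_3$, and your treatment of $l=1$ matches the paper's. One correction for the even case: you try to verify directly that $U$ is noncyclic, family by family, and your check is incomplete for the rank-one families (you only rule out $U\cong\Z_2$, not a larger cyclic $2$-group, and your appeal to ``non-abelian or $2$-rank at least $2$'' in higher rank is not argued). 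The paper gets this in one line: by (\ref{|U|}) $U$ is a Sylow-$2$ subgroup of all of $G$, and $G$ is good by hypothesis, so $U$ is noncyclic. You should use that instead.

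For $B_2$ and ${}^2A_3$ your route is genuinely different. The paper realises $G$ as $Sp_4$, respectively $SU_4$, modulo scalars and exhibits two explicit commuting involutions, one diagonal in $H$ and one a Weyl-group representative which lies in $P$ only for a suitable choice of $r$ --- this is why the lemma reserves a choice of $r$ for these two types. You instead invoke the rank-one subgroup $\rho_s(SL_2(K))\subseteq P$ attached to the surviving node $s$, whose image is $SL_2$ or $PSL_2$ over a field of odd order and therefore has generalized quaternion or dihedral (in particular noncyclic) Sylow-$2$ subgroup. If carried through, this is arguably cleaner and proves slightly more (either choice of $r$ would work for these two types, since removing either node of a rank-$2$ diagram leaves such a factor). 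But the key step is left unproved, as you yourself flag: for ${}^2A_3$ you must identify the subgroup attached to each $\tau$-orbit of simple roots, verify it is a central quotient of $SL_2$ over a field of odd order (rather than, say, a rank-one group of a different kind, as happens for other twisted families), and check it is contained in $P$. Until that is done, your argument for the two exceptional types is a plan rather than a proof, whereas the paper's matrix computation closes these cases completely.
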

\begin{proof}
First suppose $q$ is even. Then $|U|$ is a power of $2$ and $[G:U]$ is odd, by (\ref{|U|}). Hence $U$ is a Sylow-2 subgroup of $G$. 
It is noncyclic since $G$ is good. Since $U\subseteq B\subseteq P$, this shows that $P$ is good.

Now suppose $q$ is odd. Note that this excludes types ${}^2B_2$ and ${}^2F_4$. Suppose first that $l=1$, so that $G$ has type 
${}^2G_2$, ${}^2A_2$ or $A_1$. In these cases, we are not required to prove the statement when $|H|$ is even, so suppose $|H|$ is odd (in 
fact this can only occur for type $A_1$). Since $l=1$ for these groups, $W'$ is trivial and $P=B$. But then $|P|=|H|\cdot|U|$ is odd by 
(\ref{|U|}), so $P$ is good.

Now suppose $l\geq2$. It suffices to prove that $P$ contains two nontrivial commuting involutions, as this would prevent the Sylow-2 
subgroup of $P$ from being cyclic. Since $H\subseteq B\subseteq P$ and $H$ is abelian, it suffices to prove that $H$ contains two nontrivial 
involutions. Let $\#(H)$ denote the number of involutions of $H$ (including the identity).

Recall that $H$ is a quotient $H=\hat H/H_1$. It is easy to see that $\#(\hat H)\leq\#(H)\#(H_1)$. Moreover since $p$ is odd, $\hat H$ is 
a direct product of $l$ cyclic groups of even order, so $\#(\hat H)=2^l$. If $G$ is of type $D_k$, then $l=k\geq4$ and $H_1$ is generated 
by at most two elements, so
\[
	16\leq\#(\hat H)\leq\#(H)\#(H_1)\leq 4\times\#(H).
\]
Hence $\#(H)\geq4$, as required. In any other case, $H_1$ is cyclic, so $\#(H_1)\leq2$, giving $\#(H)\geq2^{l-1}$. If $l\geq3$, we again 
obtain $\#(H)\geq4$. We are left with the rank $2$ cases, namely $A_2$, $B_2$, $G_2$, ${}^2A_3$, ${}^2A_4$ and ${}^3D_4$. For types $A_2$, 
$G_2$ and ${}^2A_4$, the group $\Lambda^*/\Lambda$ has odd order, so the same is true of $H_1$. For type ${}^3D_4$, the group $H_1$ is 
trivial as noted above. In these cases, $\#(H_1)=1$, so $\#(H)\geq\#(\hat H)=4$, as required.

The remaining cases, $B_2$ and ${}^2A_3$, are dealt with most easily by realising the group $G$ explicitly. First consider the $B_2$ 
case. Let
\[
	X=\begin{pmatrix}0&0&1&0\\0&0&0&1\\-1&0&0&0\\0&-1&0&0\end{pmatrix},
\]
and consider the group
\[
	M=\{A\in SL_4(K)\mid A^tXA=X\},
\]
where $A^t$ denotes the transpose of $A$. Let $Z=\{\pm1\}$ denote the subgroup of scalar matrices in $M$. There is an isomorphism 
$G\cong M/Z$ mapping $H$ to the image of the diagonal matrices in $M$ (see \cite{Carter} Theorem 11.3.2 (iii)). Thus $H$ contains the 
image of
\[
	\begin{pmatrix}1&0&0&0\\0&-1&0&0\\0&0&1&0\\0&0&0&-1\end{pmatrix}.
\]
Unfortunately, $H$ may not be good in this case. However, $P$ must also contain the double coset of $B$ corresponding to one of the elements 
of $S$. By choosing $r$ appropriately, we may suppose $P$ contains the image of
\[
	\begin{pmatrix}0&1&0&0\\-1&0&0&0\\0&0&0&1\\0&0&-1&0\end{pmatrix}.
\]
The images of these elements in $M/Z$ are distinct commuting involutions, and we are done.

Now consider the ${}^2A_3$ case. Recall that $q=|K|$ is a prime power squared, so $K$ is a degree $2$ extension of a subfield $L$. Let 
$\bar\;$ be the unique nontrivial automorphism of $K$ over $L$. Let
\[
	X=\begin{pmatrix}0&0&0&1\\0&0&-1&0\\0&1&0&0\\-1&0&0&0\end{pmatrix},
\]
and consider the group
\[
	M=\{A\in SL_4(K)\mid A^\dagger XA=X\},
\]
where $A^\dagger$ denotes the conjugate transpose of $A$ with respect to $\bar\;$. Again we have $G\cong M/Z$, where $Z$ is the subgroup 
of scalar matrices in $M$, and $H$ maps to the image of diagonal matrices (see \cite{Carter} Theorem 14.5.1). Now $H$ contains the image of
\[
	\begin{pmatrix}1&0&0&0\\0&-1&0&0\\0&0&-1&0\\0&0&0&1\end{pmatrix}.
\]
Again by choosing $r$ appropriately, we may ensure that $P$ contains the image of
\[
	\begin{pmatrix}0&1&0&0\\-1&0&0&0\\0&0&0&1\\0&0&-1&0\end{pmatrix}.
\]
As above, the images are distinct commuting involutions, and we are done.
\end{proof}
For the next proof, we require two easy consequences of (\ref{BsBBwB}). We say that the expression $u_1u_2\ldots u_k\in W$ is \emph{reduced} 
if
\[
	\ell(u_1u_2\ldots u_k)=\ell(u_1)+\ell(u_2)+\ldots+\ell(u_k).
\]
It follows inductively from (\ref{BsBBwB}) that if $s_1s_2\ldots s_k$ is a reduced expression, with $s_i\in S$, then
\[
	Bs_1s_2\ldots s_kB=(Bs_1B)(Bs_2B)\ldots(Bs_kB).
\]
Hence
\begin{equation}\label{BwuB}
	(BwB)(BuB)=BwuB\text{ whenever }wu\text{ is reduced}.
\end{equation}
It follows that
\begin{equation}\label{BwBBuB}
	(BwB)(BuB)=(BuB)(BwB)\text{ if }wu=uw\text{ is reduced}.
\end{equation}
\begin{lem}\label{P^2}
We can choose $r$ so that every double coset $D\in\HGH{P}$ satisfies $D^2\supseteq D$. In the cases of $B_2$ and ${}^2A_3$, either choice 
of $r$ will suffice.
\end{lem}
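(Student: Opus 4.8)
The plan is to show that each double coset $D = PaP$ of $P$ in $G$ satisfies $D^2 \supseteq D$ by translating the problem, via the correspondence (\ref{PaP}), into a statement about double cosets of the parabolic subgroup $W' = \langle S - \{r\}\rangle$ in the Coxeter group $W$. Recall from (\ref{PaP}) that every $D \in \HGH{P}$ has the form $PaP = \coprod_{w \in W'vW'} BwB$ for a minimal double coset representative $v$. Since multiplication of $B$-double cosets is governed by the combinatorial rule (\ref{BsBBwB}), and since $PaP$ is a union of such $BwB$, the containment $D^2 \supseteq D$ should reduce to finding, for the chosen $v$, a way to write each constituent $BwB$ (with $w \in W'vW'$) as a product of two $B$-double cosets both lying inside $PaP$. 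Equivalently, I want $BwB \subseteq (PaP)(PaP)$ for every $w \in W'vW'$.

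First I would exploit the freedom in choosing $r$: the point of selecting which single generator to omit is that $W'$ then contains all of $S$ except $r$, so $P$ is large and the double cosets $PaP$ are few and long. The most favourable situation is when the minimal representative $v$ of the double coset admits a reduced expression whose product with itself (or with an element of $W'$) stays reduced, so that (\ref{BwuB}) applies and gives $BwB \cdot BwB = Bw^2B$ directly, with $w^2$ landing back in the same $W'$-double coset. When reducedness fails, I would instead use (\ref{BsBBwB}) in its second case: if some $s \in W' \cap S$ has $\ell(sv) < \ell(v)$, then $(BsB)(BvB) = BsvB \cup BvB$, and since both $BsB$ and $BvB$ are contained in $PaP$ (because $s \in W'$ and $sv \in W'vW'$), the union $BvB \cup BsvB$ is recovered inside $(PaP)^2$. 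Iterating this, I expect to sweep out every $BwB$ with $w \in W'vW'$, establishing $D \subseteq D^2$.

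The main obstacle will be handling the double cosets $D$ that contain the identity double coset of $W'$ itself, i.e. the case $v \in W'$ so that $PaP = P$; here I need $P^2 = P \supseteq P$, which is trivial, but the genuinely delicate cases are the short double cosets where $v$ has small length and there is little room to factor $w$ as a reduced product landing in $W'vW'$. I anticipate these forcing a case analysis keyed to the structure of $W$ (dihedral, type $B_l$ or $F_4$, or the Coxeter type matching the Dynkin diagram of $G$), exactly as foreshadowed by the parenthetical remark singling out $B_2$ and ${}^2A_3$. For a general $W$ the argument should run uniformly, but for small rank the Coxeter relations intertwine $r$ and its neighbours tightly, so I would verify those low-rank diagrams by hand.

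For the exceptional types $B_2$ and ${}^2A_3$ named in the statement, where $W$ is dihedral of order $8$, I would check the claim directly by listing the elements of $W$, computing $\ell$, and verifying that for either choice of $r$ the parabolic $W' = \{1, s\}$ (with $s$ the retained generator) produces double cosets $W'vW'$ each of which is closed under the relevant products via (\ref{BsBBwB}) and (\ref{BwBBuB}). Because $|W| = 8$ is small, this is a finite and mechanical verification that confirms \emph{both} choices of $r$ work, matching the assertion; I would present it by tabulating the four or five values of $v$ and checking $D^2 \supseteq D$ for each. This dovetails with Lemma \ref{Pgood}, which already told us that in these two types either choice of $r$ keeps $P$ good, so the two lemmas together let us apply Corollary \ref{dcst} to $P \subseteq G$ without restriction on $r$.
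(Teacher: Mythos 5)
There is a genuine gap: the mechanism you propose for producing $D$ inside $D^2$ does not function. Your fallback step claims that for $s\in W'\cap S$ with $\ell(sv)<\ell(v)$, the product $(BsB)(BvB)=BsvB\cup BvB$ is recovered ``inside $(PaP)^2$'' because both $BsB$ and $BvB$ are contained in $PaP$. But $s\in W'$ forces $BsB\subseteq P$, and $P$ is a \emph{different} double coset from $PaP$ whenever $v\notin W'$; so $(BsB)(BvB)\subseteq P\cdot PaP=PaP=D$, which says nothing about $D^2$. Worse, the hypothesis $\ell(sv)<\ell(v)$ with $s\in W'\cap S$ can never hold when $v$ is the \emph{minimal} double coset representative, by the definition of minimality. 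And your ``favourable situation,'' where $v\cdot v$ is reduced and $v^2\in W'vW'$, essentially never occurs: the representatives that actually arise (e.g.\ $r$, $rsr$, and the other palindromic words in the dihedral case) are involutions, so $\ell(v^2)=0<2\ell(v)$. Thus neither branch of your argument applies to any double coset other than $P$ itself, which is exactly the trivial case.

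The idea you are missing, which is the heart of the paper's proof, is to exhibit in each double coset $W'vW'$ an element with a reduced expression $w=u^{-1}s_1\cdots s_pu$, where the $s_i$ are \emph{commuting} elements of $S$. The seed of the argument is the second case of (\ref{BsBBwB}) applied with $w=s$: $(BsB)^2=B\cup BsB\supseteq BsB$. Conjugating by $BuB$ via (\ref{BwuB}) and (\ref{BwBBuB}) transports this self-reproduction to $BwB$, giving $(BwB)^2\supseteq BwB$ and hence $D^2\cap D\neq\emptyset$; since $D^2$ is a union of double cosets of $P$, this already yields $D^2\supseteq D$ (you do not need every $BwB$ with $w\in W'vW'$ to lie in $D^2$, only one). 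What remains, and what occupies most of the paper's proof, is a type-by-type verification that for a suitable choice of $r$ (any choice in the dihedral case, hence in $B_2$ and ${}^2A_3$) the minimal double coset representatives all admit reduced expressions of this conjugated form; for $E_7$, $E_8$ and $F_4$ this requires explicit computation. Your plan for the dihedral case (direct enumeration in a group of order $8$) would work once supplied with this mechanism, but as written the proposal has no working engine for showing $D^2\supseteq D$ when $D\neq P$.
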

\begin{proof}
Because $G$ is a disjoint union of double cosets of $P$, we need only show that $D^2$ intersects $D$ for each $D\in\HGH{P}$. By (\ref{PaP}), 
it suffices to show that every double coset in $\HGH[W]{W'}$ contains an element $w$ satisfying
\[
	(BwB)^2\supseteq BwB.
\]
This will follow if $w$ has a reduced expression of the form
\begin{equation}\label{yuck}
	w=u^{-1}s_1s_2\ldots s_pu,
\end{equation}
where $u\in W$ and the $s_i$ are commuting elements of $S$ (we allow $p$ to be $0$ and $u$ to be the identity). Indeed,
\begin{eqnarray*}
	(BwB)(BwB)
		&=&(Bu^{-1}B)(Bs_1B)\ldots(Bs_pB)(BuB)\\
		&&\;\;\times(Bu^{-1}B)(Bs_1B)\ldots(Bs_pB)(BuB)\hspace{10mm}\text{by (\ref{BwuB})}\\
		&\supseteq&(Bu^{-1}B)(Bs_1B)\ldots(Bs_pB)(Bs_1B)\ldots(Bs_pB)(BuB)\\
		&=&(Bu^{-1}B)(Bs_1B)^2\ldots(Bs_pB)^2(BuB)
			\hspace{10mm}\text{by (\ref{BwBBuB})}\\
		&\supseteq&(Bu^{-1}B)(Bs_1B)\ldots(Bs_pB)(BuB)
			\hspace{10mm}\text{by (\ref{BsBBwB})}\\
		&=&BwB
			\hspace{10mm}\text{by (\ref{BwuB}).}
\end{eqnarray*}
We will consider each possibility for $W$ and, for a particular choice of $r$, find a set of double coset representatives for $W'$ in $W$, 
each with a reduced expression of the form (\ref{yuck}); in fact it will be the set of minimal coset representatives in each case.

\textbf{Case 1 - Dihedral group:}
Suppose that $l=2$; that is, $W$ is the dihedral group. Then $S=\{r,s\}$ and $rs$ has order $n$, where $|W|=2n$. Now $W'=\{1,s\}$, so the 
minimal double coset representatives are
\[
	1,\;r,\;rsr,\;rsrsr,\;\ldots,\;rsrs\ldots srsr,
\]
where the length of the last word is $n$ or $n-1$. All these words are of the form (\ref{yuck}), as follows. For the identity we take $p=0$ and 
$u=1$. For the rest we take $p=1$; either $s_1=r$ and $u=(sr)^i$, or $s_1=s$ and $u=r(sr)^i$.

Note that our argument did not depend on the choice of $r\in S$; indeed there is an automorphism of $W$ switching the elements of 
$S$. In particular this applies in the cases $B_2$ and ${}^2A_3$, both of which have rank $2$.

\textbf{Case 2 - Type $A_l$:}
In this case $W$ is the symmetric group $S_{l+1}$. Choose $r$ to be the rightmost node, so $W'$ is the natural copy of $S_l$ 
in $S_{l+1}$. It is easy to see that there are just two double cosets of $W'$ in $W$; the minimal coset representatives are $1$ and 
$r$, both of which are of the form (\ref{yuck}).

\textbf{Case 3 - Type $B_l=C_l$:}
Now $W$ is the wreath product
\[
	W=S_l\wr\Z_2=\{(\sigma,\epsilon_1,\epsilon_2,\ldots,\epsilon_l)\mid\sigma\in S_l,\;\epsilon_i=\pm1\}.
\]
Write $S=\{\tau,s_1,s_2,\ldots,s_{l-1}\}$, where the $s_i$ generate $S_l$, and
\[
	\tau=(1,-1,1,1,\ldots,1).
\]
Let $r=s_{l-1}$. Consider the double coset of $W'$ in $W$ containing $u=(\sigma,\epsilon_1,\epsilon_2,\ldots,\epsilon_l)$. As in Case 2, 
by multiplying $u$ on the left and right by elements of $S_{l-1}$, we may suppose that $\sigma=1$ or $\sigma=r$. Also for any 
$\nu_1,\nu_2,\ldots,\nu_{l-1}\in\{\pm1\}$, the element $(1,\nu_1,\nu_2,\ldots,\nu_{l-1},1)$ is in $W'$. Hence if $\sigma=1$, we may replace 
$u$ by
\[
	u(1,\epsilon_1,\epsilon_2,\ldots,\epsilon_{l-1},1)
		=(1,1,\ldots,1,\epsilon_l).
\]
If $\sigma=r$, we may replace $u$ by
\[
	(1,1,\ldots,\epsilon_l,1)u(1,\epsilon_1,\epsilon_2,\ldots,\epsilon_{l-1},1)
		=(r,1,\ldots,1,1).
\]
Therefore the elements $(1,1,\ldots,1,\pm1)$ and $(r,1,\ldots,1)$ form a set of double coset representatives for $W'$ in $W$. Written in terms 
of the generators, these elements are
\[
	1,\;r,\;s_{l-1}s_{l-2}\ldots s_1\tau s_1s_2\ldots s_{l-1}.
\]
Again each is of the form (\ref{yuck}).

\textbf{Case 3 - Type $D_l$:}
We may realise $W$ as the subgroup
\[
	W=\{(\sigma,\epsilon_1,\epsilon_2,\ldots,\epsilon_l)\in S_l\wr\Z_2\mid\epsilon_1\epsilon_2\ldots\epsilon_l=1\}.
\]
The generating involutions are $\{\rho,s_1,\ldots,s_{l-1}\}$, where the $s_i$ are as above, but $\rho$ is now $(s_1,-1,-1,1,\ldots,1)$. Again 
let $r=s_{l-1}$. Arguing as in the previous case, the double cosets of $W'$ in $W$ are represented by the elements
\begin{eqnarray*}
	(1,1,1,\ldots,1,1)&=&1,\\
	(1,-1,1,\ldots,1,-1)&=&s_{l-1}s_{l-2}\ldots s_2s_1\rho s_2s_3\ldots s_{l-1},\\
	(r,1,1,\ldots,1,1)&=&r.
\end{eqnarray*}
As above, these expressions are of the form (\ref{yuck}).

For a specfic Coxeter group, a computer algebra package such as MAGMA \cite{MAGMA} can be used to find the minimal double coset 
representatives for $W'$ in $W$, and to determine when specified words are reduced. We do so in the remaining cases without further 
comment. Also for brevity, we simply denote elements of $S$ by integers, and we use $\varepsilon$ to denote the identity.

% M:=SymmetricMatrix([ 1, 3,1, 2,3,1, 2,2,3,1, 2,2,3,2,1, 2,2,2,2,3,1]);
% G:=CoxeterGroup(M);
% H:=StandardParabolicSubgroup(G,{1..5});
% T:=Transversal(G,H);
% D:=T meet {w^(-1) : w in T};
% W,f:=CoxeterGroup(GrpFPCox,G);
% {f(w) : w in D};
\textbf{Case 4 - $W=E_6$:}
Label $S$ as shown below.
% Clear[f, r, ls]
% r = .1;
% f[z_] = {Re[z], Im[z]};
% ls = {-2, -1, 0, I, 0, 1, 2};
% labels = {{-2, 1, I}, {-1, 2, I}, {0, 3, I}, {I, 4, 1}, {1, 5, I}, {2, 6, I}};
% Show[Graphics[{Table[
%           Line[{{1 - r, r}, {r, 1 - r}}.f /@ ls[[{i, i + 1}]]], {i, 
%             Length[ls] - 1}], Circle[f[#], r] & /@ ls, 
%         Text[#[[2]], f[{1, 0, -r}.#], 1.1 f[#[[3]]]] & /@ labels
%         }], PlotRange -> All, AspectRatio -> Automatic, 
%     TextStyle -> {FontSize -> 16, FontFamily -> Times}];
\[\includegraphics[height=15mm]{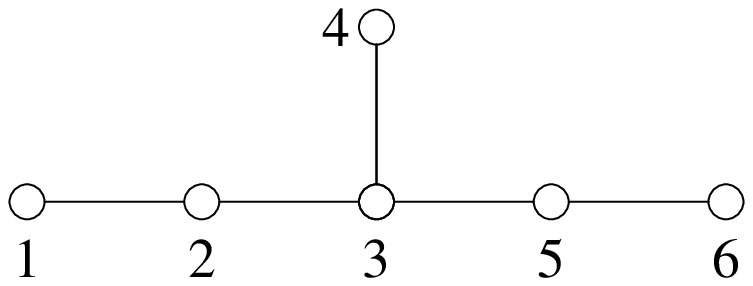}\]
Let $r=6$. The minimal double coset representatives are
\[
	\varepsilon,\;6,\;65324356.
\]
The last representative can be written $u^{-1}24u$, with $u=356$. Since $2$ and $4$ commute, each representative is of the form (\ref{yuck}), 
and we are done.

\textbf{Case 5 - $W=E_7$:}
Label $S$ as shown below.
% ls = {-2, -1, 0, I, 0, 1, 2, 3};
% labels = {{-2, 1, I}, {-1, 2, I}, {0, 3, I}, {I, 4, 1}, {1, 5, I}, {2, 6, I}, {3, 7, I}};
\[\includegraphics[height=15mm]{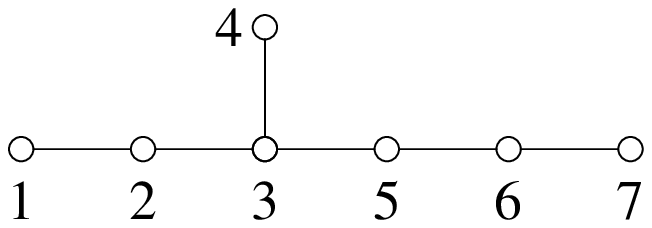}\]
Let $r=7$. The minimal double coset representatives are
\[
	\varepsilon,\;7,\;7653243567,\;765321432534653217653243567.
\]
The last two words can be written $u_1^{-1}24u_1$ and $u_2^{-1}457u_2$ respectively, where
\begin{eqnarray*}
	u_1&=&3567,\\
	u_2&=&635234123567.
\end{eqnarray*}
Since $2$, $4$, $5$ and $7$ all commute, again each word is of the form (\ref{yuck}).

\textbf{Case 6 - $W=E_8$:}
Label $S$ as shown below.
% ls = {-2, -1, 0, I, 0, 1, 2, 3, 4};
% labels = {{-2, 1, I}, {-1, 2, I}, {0, 3, I}, {I, 4, 1}, {1, 5, I}, {2, 6, I}, {3, 7, I}, {4, 8, I}};
\[\includegraphics[height=15mm]{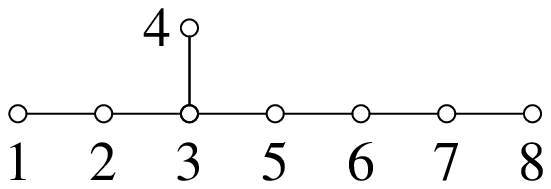}\]
Let $r=8$. The minimal double coset representatives are
\begin{eqnarray*}
	&\varepsilon,\;8,\;876532435678,\;87653214325346532176532435678,&\\
		&876532143253465321765324356787653214325346532176532435678.&
\end{eqnarray*}
The last three words can be written $u_1^{-1}24u_1$, $u_2^{-1}457u_2$ and $u_3^{-1}8u_3$ respectively, where
\begin{eqnarray*}
	u_1&=&35678,\\
	u_2&=&6352341235678,\\
	u_3&=&7653423567123564352341235678.
\end{eqnarray*}
Again since $2$, $4$, $5$ and $7$ all commute, each word is of the form (\ref{yuck}).

\textbf{Case 7 - $W=F_4$:}
Label $S$ as shown below.
% Clear[r]
% r = .1;
% Show[Graphics[{
%         Line[{{1 + r, 0}, {2 - r, 0}}],
%         Line[{{2, r/2}, {3, r/2}}],
%         Line[{{2, -r/2}, {3, -r/2}}],
%         Line[{{3 + r, 0}, {4 - r, 0}}],
%         {RGBColor[1, 1, 1], Disk[{#, 0}, r] & /@ Range[2, 3], 
%           Point[{1, -4 r}]},
%         Circle[{#, 0}, r] & /@ Range[4],
%         Text[#, {#, -r}, {0, 1}] & /@ Range[4]
%         }], PlotRange -> All, AspectRatio -> Automatic, 
%     TextStyle -> {FontSize -> 16, FontFamily -> Times}];
\[\includegraphics[width=40mm]{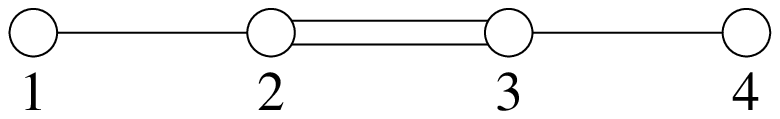}\]
Let $r=4$. The minimal double coset representatives are
\[
	\varepsilon,\;4,\;43234,\;43213234,\;432132343213234.
\]
The last three words can be written $u_1^{-1}2u_1$, $u_2^{-1}13u_2$ and $u_3^{-1}4u_3$ respectively, where
\begin{eqnarray*}
	u_1&=&34,\\
	u_2&=&234,\\
	u_3&=&3213234.
\end{eqnarray*}
Since $1$ and $3$ commute, each word is of the form (\ref{yuck}).
\end{proof}
The previous two lemmas allow us to apply Corollary \ref{dcst}, except when $G$ has type $A_1$, ${}^2A_2$ or ${}^2G_2$, 
and $q$ is odd and $|H|$ is even. From this point, suppose $G$ is such a group. Then the Weyl group of $G$ has rank 1; that is, $|W|=2$. 
Unfortunately this implies that there is only one proper parabolic subgroup, namely $B$, and it is bad. We are forced to do some explicit 
calculations in these cases.

\begin{lem}
There is an involution $n\in N-H$, and $nUn=V$. Every element of $G$ is uniquely expressible either as $hu$ or $u'nhu$, where $h\in H$ 
and $u,u'\in U$.
\end{lem}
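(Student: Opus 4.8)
The plan is to deduce the whole statement from the Bruhat normal form (\ref{normalform}), using that $W=N/H$ has order $2$. Write $W=\{1,s\}$, where $s$ is the unique nontrivial element; it is then the longest element, with $\ell(1)=0$ and $\ell(s)=1$. I would first produce the involution. Pick any $\bar n\in N$ mapping to $s$, for instance $\bar n=n_r$ for the relevant root $r$, and set $c=\bar n^2\in H$ (so $c=h_r(-1)$ when $\bar n=n_r$). Since $s$ is the longest element it acts on the cyclic torus $H$ by inversion, that is $\bar n h\bar n^{-1}=h^{-1}$ for all $h\in H$, and as $H$ is abelian we obtain
\[
  (\bar n h)^2=(\bar n h\bar n^{-1})\,\bar n^2\,h=h^{-1}ch=c
  \hspace{5mm}\text{for every }h\in H.
\]
Since every element of $N-H$ has the form $\bar n h$, it follows that $N-H$ contains an involution precisely when $c=1$. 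For $\bar n=n_r$ this is the assertion $h_r(-1)=1$, which holds because in the simple group $G$ the element $h_r(-1)=\rho_r(-I)$ is trivial. Taking $n=\bar n$ then yields an involution in $N-H$; in fact $N$ is the dihedral normaliser of a maximal torus and the entire coset $N-H$ consists of involutions.

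Next I would verify $nUn=V$. As $n$ represents the reflection $s$, conjugation by $n$ carries each positive root subgroup to a negative one: in the untwisted (rank $1$) case $U=\langle x_r(t)\mid t\in K\rangle$ and $n\,x_r(t)\,n^{-1}=x_{-r}(\pm t)\in V$, giving $nUn^{-1}=V$; in the twisted case conjugation by $n$ interchanges $\bU$ and $\bV$ and commutes with $\sigma$, hence interchanges $U=\bU^\sigma$ and $V=\bV^\sigma$. Since $n=n^{-1}$, this gives $nUn=V$.

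The uniqueness of the two expressions is then (\ref{normalform}) specialised to $|W|=2$, taking $n_s=n$. For $w=1$ we have $U_1^-=\{1\}$ and a representative lying in $H$, so the normal form reads $hu$ with $h\in H$ and $u\in U$; these are exactly the elements of $B$. For $w=s$, the longest element, we have $U_s^-=U$, so the normal form reads $u'nhu$ with $u',u\in U$ and $h\in H$; these are exactly the elements of $BsB$. As $G=B\amalg BsB$, every element of $G$ is uniquely of one of the two stated forms.

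I expect the crux to be the existence of the involution, that is confirming $c=1$ (equivalently, that $N$ is dihedral rather than dicyclic), particularly in the twisted cases ${}^2A_2$ and ${}^2G_2$. There one must identify the relevant rank-$1$ subgroup, compute the square of the natural representative of $s$, and check that it vanishes in the simple quotient using the explicit description of $H=\hat H/H_1$ recorded above; the other two claims are formal consequences of the normal form.
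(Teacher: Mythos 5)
Your treatment of the uniqueness statement (specialising (\ref{normalform}) to $|W|=2$) and of the untwisted case $A_1$ agrees with the paper, which likewise disposes of the normal form in one line and takes $n$ to be the image of $\left(\begin{smallmatrix}0&1\\-1&0\end{smallmatrix}\right)$ in $PSL_2(K)$. The gap is exactly where you suspect it to be: the twisted cases, and the uniform mechanism you propose for producing the involution does not survive there. First, the premise that the nontrivial Weyl element acts on the torus by inversion is false for ${}^2A_2$: with $H$ realised as $\{\mathrm{diag}(a,\bar aa^{-1},\bar a^{-1})\}$ inside $SU_3$, conjugation by a representative of $s$ sends the parameter $a$ to $\bar a^{-1}$, i.e.\ inversion composed with the field automorphism, so your identity $(\bar nh)^2=c$ and the reduction of the whole problem to the single constant $c=\bar n^2$ break down (and with it the claim that all of $N-H$ consists of involutions). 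Second, ``$c$ vanishes in the simple quotient'' is not an available argument in either twisted case: the centre of $SU_3(q)$ has odd order $\gcd(3,q+1)$, so no nontrivial $2$-element of $\bar H$ dies in the quotient, and ${}^2G_2(q)$ has trivial centre, so nothing is being quotiented at all. The involution must be exhibited upstairs, which is what the paper does: for ${}^2A_2$ it takes $n$ to be the image of the Gram matrix $X$, which satisfies $X^2=I$ on the nose, and for ${}^2G_2$ it takes not a single $n_r$ (whose square $h_r(-1)$ is nontrivial) but the product $n=n_{11}n_{31}$ of two commuting reflections' representatives, and verifies $n^2=h_{11}(-1)h_{31}(-1)=1$ from the coroot relation (\ref{hr+s}).

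There is a second omission specific to ${}^2G_2$ (and in principle to ${}^2A_2$): since the twisted group $G$ is defined as the subgroup of $\bar G$ generated by $U=\bU^\sigma$ and $V=\bV^\sigma$, it is not automatic that your chosen $n\in\bar N$ lies in $G$, and without that neither ``$n\in N-H$'' nor ``$nUn=V$'' (as an identity of subgroups of $G$) is established. Your appeal to ``conjugation by $n$ commutes with $\sigma$'' presupposes $\sigma(n)=n$, which again has to be checked. The paper closes this by writing $n=x_{11}(1)x_{31}(1)\,y_{11}(-1)y_{31}(-1)\,x_{11}(1)x_{31}(1)\in UVU\subseteq G$ using the explicit parametrisations (\ref{U}) and (\ref{V}), after which $nUn=G\cap n\bU n=G\cap\bV=V$ is immediate. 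So your outline is a correct reduction of the problem, but the three concrete verifications it defers --- the right choice of representative, the identity $n^2=1$, and membership of $n$ in $G$ --- are precisely the content of the paper's proof and are not supplied.
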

\begin{proof}
The second statement is true for any $n\in N-H$ by (\ref{normalform}). For type $A_1$, the group $G$ is $PSL_2(K)$ (\cite{Carter} 
Theorem 11.3.2), and we may take
\[
	n=\begin{pmatrix}0&1\\ -1&0\end{pmatrix}.
\]
In the ${}^2A_2$ case, $K$ is a degree 2 extension of a subfield $L$. Let $\bar\;$ denote the nontrivial automorphism of $K$ over $L$. Let 
\[
	X=\begin{pmatrix}0&0&1\\ 0&-1&0\\ 1&0&0\end{pmatrix}
\]
and $M=\{A\in SL_3(K)\mid A^\dagger XA=X\}$, where $\dagger$ denotes conjugate transpose with respect to $\bar\;$. Then $G$ is $M$ modulo 
scalar matrices (\cite{Carter} Theorem 14.5.1), and we may take $n$ to be the image of $X$.

Finally suppose $G={}^2G_2(q)$, where $q=3^{2k+1}=3\theta^2$. Recall that $G$ is a subgroup of the group $\bar G$ of 
type $G_2(q)$. Let $r$ and $s$ denote, respectively, the short and long simple roots of the root system $\Phi$ of $\bar G$. For brevity we 
will denote
\[
	x_{ij}(t)=x_{ir+js}(t),\hspace{5mm}
	y_{ij}(t)=x_{-ir-js}(t),\hspace{5mm}
	h_{ij}(t)=h_{ir+js}(t),\hspace{5mm}
	n_{ij}=n_{ir+js}
\]
for $ir+js\in\Phi^+$ and $t\in K$. The root system $\Phi$ is depicted below.
% ls = {"01", "11", "32", "21", "31", "10"};
% \[Theta] = Pi/6;
% Export["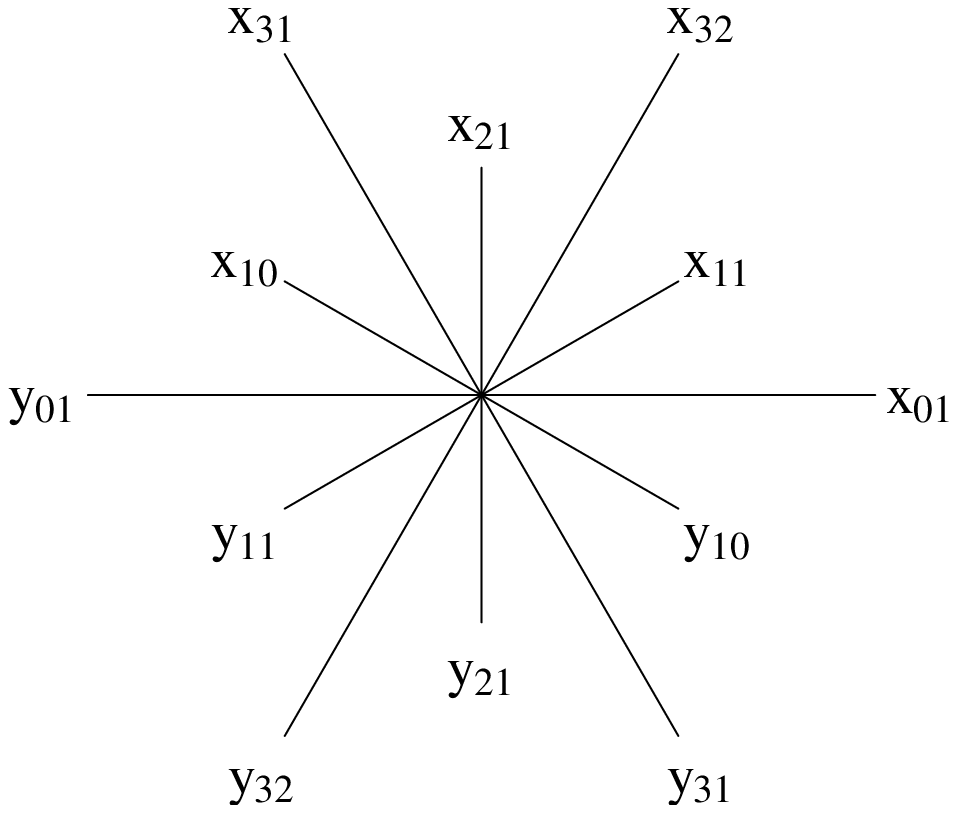", Show[Graphics[Table[{
%           r = If[EvenQ[i], Sqrt[3], 1];
%           p = {Cos[i \[Theta]], Sin[i \[Theta]]};
%           Line[{-p r, p r}],
%           Text["\!\(x\_\(" <> ls[[i + 1]] <> "\)\)", p (r + .2)],
%           Text["\!\(y\_\(" <> ls[[i + 1]] <> "\)\)", -p (r + .2)]},
%         {i, 0, 5}]],
%     PlotRange -> {{-2.2, 2.2}, {-1.8, 1.8}},
%     AspectRatio -> Automatic,
%     TextStyle -> {FontSize -> 16, FontFamily -> Times},
%     DisplayFunction -> Identity]]
\[\includegraphics[width=60mm]{hallpaige5.eps}\]
Let $n=n_{11}n_{31}$. Then $n_{11}$ and $n_{31}$ commute by (\ref{Chev}), so
\[
	n^2=n_{11}^2n_{31}^2=h_{11}(-1)h_{31}(-1)=1,
\]
using (\ref{hr+s}). Also $n$ maps to the longest element of the Weyl group of $\bar G$, so $n\bar U n=\bar V$ (\cite{Carter} Lemma 7.2.1). 
We have an explicit description of the subgroups $U\subseteq\bU$ and $V\subseteq\bV$ (\cite{Carter} Propositions 13.6.1 and 13.6.3):
\begin{eqnarray}
	\hspace{-10mm}&&U=\{x_{10}(t^\theta)x_{01}(t)x_{11}(t^{\theta+1}+u^\theta)x_{21}(t^{2\theta+1}+v^\theta)x_{31}(u)x_{32}(v)
		\mid t,\,u,\,v\in K\},\label{U}\\
	\hspace{-10mm}&&V=\{y_{10}(t^\theta)y_{01}(t)y_{11}(t^{\theta+1}+u^\theta)y_{21}(t^{2\theta+1}+v^\theta)y_{31}(u)y_{32}(v)
		\mid t,\,u,\,v\in K\}.\label{V}
\end{eqnarray}
In particular,
\[
	n=x_{11}(1)x_{31}(1)y_{11}(-1)y_{31}(-1)x_{11}(1)x_{31}(1)
		\in UVU\subseteq G.
\]
Hence $n\in G\cap(\bar N-\bar H)=N-H$, and $nUn=G\cap n\bU n=G\cap\bV=V$, as required.
\end{proof}
\begin{lem}
There is a nontrivial element $a\in U$ such that $H\subseteq UVaV$.
\end{lem}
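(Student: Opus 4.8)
The plan is to turn the containment into a covering condition and then check it in each of the three groups using the explicit models from the previous lemma. Note first that $h\in UVaV$ exactly when the right coset $Uh$ meets $VaV$: if $h=ux$ with $u\in U$ and $x\in VaV$ then $x=u^{-1}h\in Uh\cap VaV$, and conversely. Since $U$ is normal in $B$ and $B=\coprod_{h\in H}Uh$ with $B/U\cong H$, the assertion $H\subseteq UVaV$ is equivalent to saying that the quotient map $B\to B/U\cong H$ sends $VaV\cap B$ onto all of $H$. Concretely, I must fix one nontrivial $a\in U$ and, for each $h\in H$, solve $uv_1av_2=h$ for $u\in U$ and $v_1,v_2\in V$. (Conjugating by the involution $n$ of the previous lemma, and using $V=nUn$, turns this into the symmetric problem of covering $H$ by $VUa'U$ for a nontrivial $a'\in V$, which is occasionally more convenient.)

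For type $A_1$ we have $G=PSL_2(K)$ with $U,V$ the upper and lower unitriangular subgroups and $H$ the image of the diagonal torus. Taking $a=x_r(\alpha)$ with $\alpha\neq0$ and writing $h$ for the image of $\mathrm{diag}(d,d^{-1})$, the equation $uv_1av_2=h$ reduces to four scalar equations; one solves it with the lower parameters of $v_1,v_2$ equal to $(d^{-1}-1)/\alpha$ and $(d-1)/\alpha$ and the upper parameter of $u$ equal to $-\alpha d$, and a one-line check confirms the lower-left entry vanishes. Hence every $h$ is hit and any nontrivial $a$ works. For type ${}^2A_2$ I use the model $G\cong M/Z$ with $M=\{A\in SL_3(K)\mid A^\dagger XA=X\}$, with $U,V$ the upper and lower unitriangular elements of $M$ and $H$ the image of the diagonal torus. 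Here it is cleanest to take $a$ to be a nontrivial central element of $U$ (the long-root subgroup, affecting only the $(1,3)$ entry subject to the Hermitian condition); the equation $uv_1av_2=h$ again unwinds to a small solvable system over $K$ reaching an arbitrary diagonal class.

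Type ${}^2G_2$ is the substantive case and is where I expect essentially all the work to be. Here $U$ and $V$ are the non-abelian groups of order $q^3$ given by the parametrisations (\ref{U}) and (\ref{V}), with $q=3\theta^2$, and $n=n_{11}n_{31}$ satisfies $nUn=V$. I would take $a$ in the centre of $U$, namely $a=x_{32}(c)$ for the highest root, so that $nan=y_{32}(\pm c)$ is a fixed nontrivial element of $V$. Given $h\in H$, one multiplies out $v_1av_2$ by repeated use of Chevalley's formula (\ref{Chev}) with the $G_2$ structure constants, simplifies the resulting torus contributions with (\ref{hr+s}), and rewrites the product in the normal form (\ref{normalform}); the three parameters of each of $v_1,v_2$ are then tuned to cancel the $n$-part (forcing the product into $B$) while realising the prescribed $H$-component. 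The difficulty, absent in the two matrix cases, is that multiplication in $U$ and $V$ is non-abelian and twisted by the field map $\theta$, so matching $v_1av_2$ against $Uh$ demands careful bookkeeping of the Chevalley constants and of $\theta$; solvability of the resulting equations over $K$ rests on $\theta$ acting bijectively (and on $q$ being odd). The remaining two cases are routine linear algebra by comparison.
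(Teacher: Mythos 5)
Your reduction of the statement to solving $uv_1av_2=h$ is sound, and your $A_1$ computation checks out: with $u=x_r(-\alpha d)$, $v_1=x_{-r}((d^{-1}-1)/\alpha)$, $v_2=x_{-r}((d-1)/\alpha)$ one does obtain $\mathrm{diag}(d,d^{-1})$, and any nontrivial $a$ works there. Your ${}^2A_2$ sketch also agrees with the paper, which takes $a$ to be a long-root element with $(1,3)$ entry $\epsilon$ satisfying $\epsilon+\bar\epsilon=0$ and likewise omits the computation. The problem is the ${}^2G_2$ case, which you rightly identify as carrying essentially all the weight, but where your proposal contains a concrete error and omits the entire substantive argument.

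First, $a=x_{32}(c)$ with $c\neq0$ is \emph{not} an element of $U$: by the parametrisation (\ref{U}), any element of $U$ with $x_{32}$-coordinate $c$ has $x_{21}$-coordinate $t^{2\theta+1}+c^\theta$, so the only element of $U$ supported on the highest root alone is the identity; the centre of $U$ is $\{x_{21}(c^\theta)x_{32}(c)\}$, not $\{x_{32}(c)\}$. The paper in fact takes the \emph{non-central} element $a=x_{31}(1)x_{11}(1)$, and its computation depends essentially on that choice: the identity it establishes, $\bU y_{21}(t)y_{32}(u)x_{31}(v)x_{11}(w)\bV=\bU\bV h_{01}(1+u^2vw^3)^{-1}h_{10}(1+t^2vw)^{-1}$, degenerates to the trivial torus element when $v=w=0$, so there is no reason to expect a central $a$ to suffice even after correcting it to lie in $U$. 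Second, and more seriously, ``the parameters are then tuned \dots\ solvability rests on $\theta$ acting bijectively'' is a plan rather than a proof, and it mislocates the difficulty. One must show that \emph{every} $h\in H$ is of the reachable form; in the paper this requires the constraint $\lambda=\mu^\theta$ forced by (\ref{Htwisted}), the observation that $-1$ is a non-square in $K^*$ (so $\mu^{-1}$ is a sum of two squares), and the computation $\gcd(3\theta+1,q-1)=2$ to extract a $(3\theta+1)$-st root. This arithmetic step has no counterpart in your outline. There is also the descent from $\bU\cdots\bV$ to $U\cdots V$, which the paper handles via uniqueness of the normal form (\ref{normalform}) and $\sigma$-invariance; working ``directly in $V$'' does not obviously avoid it, since the Chevalley commutator manipulations take place in $\bar G$. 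As written, the key case of the lemma is unproved.
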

\begin{proof}
We give the calculation in the case ${}^2G_2(q)$. Let $a=x_{31}(1)x_{11}(1)$, which is in $U$ by (\ref{U}). We will show that any 
element $h\in H$ is in $UVaV$. By (\ref{hr+s}), we can write $h=h_{10}(\lambda)h_{01}(\mu)$ for some $\lambda,\,\mu\in K^*$. Recall that 
we have identified $\bar H$ with $\Hom(\Lambda,K^*)$ by defining
\[
	h_q(\lambda)(v)=\lambda^{\frac{2(q,v)}{(q,q)}}
\]
for $q\in\Phi$. By (\ref{Htwisted}),
\[
	\lambda^2\mu^{-1}=h(r)=h(s)^\theta=\lambda^{-3\theta}\mu^{2\theta}.
\]
Putting both sides to the power $3\theta-2$, and noting that $t^{3\theta^2}=t$ for $t\in K$, we obtain $\lambda=\mu^\theta$. Since 
$|K^*|=3^{2k+1}-1\equiv2\pmod{4}$, there are no elements of order $4$ in $K^*$, so $-1$ is not a square. Thus we can either write 
$\mu^{-1}=\nu^2$ or $\mu^{-1}=-\nu^2=\nu^2+\nu^2$. In either case $\mu^{-1}=\nu^2+\kappa^2$ for some $\nu\in K^*$ and $\kappa\in K$. 
Finally
\[
	(3\theta+1,q-1)=(3\theta+1,3\theta^2-1)=(3\theta+1,\theta+1)=(2,\theta+1)=2,
\]
so we can choose $\iota\in K^*$ with $\iota^{3\theta+1}=\nu^2$. We can therefore write
\begin{equation}\label{h}
	h=h_{10}(\iota^{\theta+1}+\kappa^{2\theta})^{-1}h_{01}(\iota^{3\theta+1}+\kappa^2)^{-1}
\end{equation}
with $\iota\neq0$. Given $t,u\in K$ with $tu\neq-1$, we have
\[
	\begin{pmatrix}1&0\\t&1\end{pmatrix}\begin{pmatrix}1&u\\0&1\end{pmatrix}
	=\begin{pmatrix}1&u\\t&1+tu\end{pmatrix}
	=\begin{pmatrix}1&u\lambda\\0&1\end{pmatrix}\begin{pmatrix}1&0\\t\lambda^{-1}&1\end{pmatrix}
		\begin{pmatrix}\lambda&0\\0&\lambda^{-1}\end{pmatrix},
\]
where $\lambda=(1+tu)^{-1}$. Applying the homomorphism $\rho_{ir+js}:SL_2(K)\rightarrow\bar G$ gives 
$y_{ij}(t)x_{ij}(u)=x_{ij}(u\lambda)y_{ij}(t\lambda^{-1})h_{ij}(\lambda)$. Using this and Chevalley's commutator 
formula (\ref{Chev}), and noting that $K$ has characteristic $3$, we calculate
\begin{eqnarray*}
	\bU y_{21}(t)y_{32}(u)x_{31}(v)x_{11}(w)\bV\hspace{-40mm}\\
		&=&\bU y_{21}(t)x_{31}(v)y_{01}(-uv)y_{32}(u)x_{11}(w)\bV\\
		&=&\bU x_{31}(v)x_{10}(-tv)y_{11}(t^2v)y_{32}(t^3v)y_{01}(t^3v^2)y_{21}(t)y_{01}(-uv)y_{32}(u)x_{11}(w)\bV\\
		&=&\bU y_{01}(t^3v^2-uv)y_{32}(t^3v+u)y_{11}(t^2v)y_{21}(t)x_{11}(w)\bV\\
		&=&\bU y_{01}(t^3v^2-uv)y_{32}(t^3v+u)y_{11}(t^2v)x_{11}(w)y_{10}(2tw)y_{21}(t)\bV\\
		&=&\bU y_{01}(t^3v^2-uv)y_{32}(t^3v+u)y_{11}(t^2v)x_{11}(w)\bV\\
		&=&\bU y_{01}(t^3v^2-uv)y_{32}(t^3v+u)x_{11}(w\mu)\bV h_{11}(\mu)
			\;\;\text{with }\mu=(1+t^2vw)^{-1}\\
% 		&=&\bU y_{01}(t^3v^2-uv)x_{11}(w\mu)x_{01}(-w^3\mu^3(t^3v+u))y_{10}(w^2\mu^2(t^3v+u))\\
% 		&&\;\;y_{21}(w\mu(t^3v+u))y_{32}(t^3v+u)y_{31}(w^3\mu^3(t^3v+u)^2)\bV h_{11}(\mu)\\
		&=&\bU y_{01}(t^3v^2-uv)x_{11}(w\mu)x_{01}(-w^3\mu^3(t^3v+u))\bV h_{11}(\mu)\\
% 		&=&\bU x_{11}(w\mu)x_{31}(-w^3\mu^3(t^3v^2-uv)^2)x_{32}(w^3\mu^3(t^3v^2-uv))x_{21}(-w^2\mu^2(t^3v^2-uv))\\
% 		&&\;\;x_{10}(w\mu(t^3v^2-uv))y_{01}(t^3v^2-uv)x_{01}(-w^3\mu^3(t^3v+u))\bV h_{11}(\mu)\\
		&=&\bU y_{01}(t^3v^2-uv)x_{01}(-w^3\mu^3(t^3v+u))\bV h_{11}(\mu)\\
		&=&\bU\bV h_{01}\left(1-vw^3\mu^3(t^3v+u)(t^3v-u)\right)^{-1}h_{11}(\mu)\\
		&=&\bU\bV h_{01}\left(1-vw^3\mu^3(t^6v^2-u^2)\right)^{-1}h_{01}(\mu)^3h_{10}(\mu)
			\;\;\text{by (\ref{hr+s})}\\
		&=&\bU\bV h_{01}\left((1+t^2vw)^3-vw^3(t^6v^2-u^2)\right)^{-1}h_{10}(\mu)\\
		&=&\bU\bV h_{01}\left(1+u^2vw^3\right)^{-1}h_{10}\left(1+t^2vw\right)^{-1},
\end{eqnarray*}
assuming that $1+t^2vw$ and $1+u^2vw^3$ are nonzero. Thus with $\iota$ and $\kappa$ as in (\ref{h}),
\begin{eqnarray*}
	\bU y_{21}(\kappa^\theta)y_{32}(\kappa)y_{31}(\iota-1)y_{11}(\iota^\theta-1)a\bV\hspace{-50mm}\\
		&=&\bU y_{21}(\kappa^\theta)y_{32}(\kappa)y_{31}(\iota-1)y_{11}(\iota^\theta-1)x_{31}(1)x_{11}(1)\bV\\
		&=&\bU y_{21}(\kappa^\theta)y_{32}(\kappa)x_{31}(\iota^{-1})x_{11}(\iota^{-\theta})\bV h_{11}(\iota^{-\theta})h_{31}(\iota^{-1})\\
		&=&\bU\bV h_{01}\left(1+\kappa^2\iota^{-3\theta-1}\right)^{-1}h_{10}\left(1+\kappa^{2\theta}\iota^{-\theta-1}\right)^{-1}
			h_{11}(\iota^{-\theta})h_{31}(\iota^{-1})\\
		&=&\bU\bV h_{10}\left(\iota^{\theta+1}+\kappa^{2\theta}\right)^{-1}h_{01}\left(\iota^{3\theta+1}+\kappa^2\right)^{-1}
			\;\;\text{by (\ref{hr+s})}\\
		&=&\bU\bV h.
\end{eqnarray*}
Now $y_{21}(\kappa^\theta)y_{32}(\kappa)y_{31}(\iota-1)y_{11}(\iota^\theta-1)\in V$ by (\ref{V}), so
\[
	h\in \bU Va\bV=\bU nUnan\bU n.
\]
Write $h=bncnandn$, where $b,d\in\bU$ and $c\in U$. Rearranging,
\[
	dnh^{-1}b=na^{-1}nc^{-1}n.
\]
Recall that $U$ and $V$ are the subgroups of $\bU$ and $\bV$ fixed by $\sigma$. The right hand side above is invariant under $\sigma$, 
so $dnh^{-1}b=\sigma(d)nh^{-1}\sigma(b)$. Now (\ref{normalform}) implies $d=\sigma(d)$ and $b=\sigma(b)$; that is, $d,\,b\in U$. Hence 
$h\in UVaV$, as required.

In the notation of the previous proof, we may take
\[
	a=\begin{pmatrix}1&1\\ 0&1\end{pmatrix}
\]
in the $A_1$ case, and
\[
	a=\begin{pmatrix}1&0&\epsilon\\ 0&1&0\\ 0&0&1\end{pmatrix}
\]
in the ${}^2A_2$ case, where $\epsilon\in K^*$ satisfies $\epsilon+\bar\epsilon=0$. We omit these calculations as 
they are similar to, but much easier than, the ${}^2G_2$ calculation.
\end{proof}
\begin{lem}\label{UnhU}
The double cosets of $U$ satisfy $(UnhU)(Unh'U)\supseteq(Unh''U)$ for all $h,h',h''\in H$.
\end{lem}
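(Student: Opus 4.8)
The plan is first to evaluate the product $(UnhU)(Unh'U)$ explicitly. A general element is $(u_1nhu_2)(u_3nh'u_4)$ with the $u_i\in U$, and it depends only on $u_1$, on $g:=u_2u_3$, and on $u_4$, each ranging freely over $U$. Using $nUn=V$, the relation $hU=Uh$ (since $H$ normalises $U$), and $hn=n(nhn)$, I would rewrite $nhgn=\big(n(hgh^{-1})n\big)(nhn)=v'k$, where $v'$ ranges over all of $V$ and $k:=(nhn)h'\in H$ is fixed. Hence
\[
  (UnhU)(Unh'U)=UVkU,\qquad k=(nhn)h'.
\]
As $h,h'$ range over $H$, the element $k$ ranges over all of $H$.

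Next I would reduce the assertion to a single containment. The set $UVkU$ is a union of $U$–$U$ double cosets and $Unh''U$ is one such double coset, so it suffices to show $nh''\in UVkU$. Since $k$ normalises $U$ and $V$ we have $UVkU=UVUk$, so this is equivalent to $n(h''k^{-1})\in UVU$; and since $h''k^{-1}$ runs over all of $H$, the whole statement is equivalent to the single inclusion $nH\subseteq UVU$, that is, $BnB\subseteq UVU$.

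To prove $BnB\subseteq UVU$ I would use that every nontrivial element of $V$ lies in the big cell (indeed $V\cap B=\{1\}$), so each $v\in V\setminus\{1\}$ has a normal form $v=u'nh_vu$ with $h_v\in H$. Then $UVU$ meets the big cell precisely in the double cosets $Unh_vU$, so that $BnB\subseteq UVU$ amounts to the index map $v\mapsto h_v$ being onto $H$ (note $1$ is in the image, since the earlier lemma exhibits $n\in UVU$). This is where the previously established $H\subseteq UVaV$, with $a\in U$ nontrivial, enters: writing a given $h''\in H\setminus\{1\}$ as $u_1v_1av_2$ forces $v_1,v_2\ne1$, because if either were trivial then $V\cap B=\{1\}$ would give $h''=1$. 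Comparing the two normal-form indices of $v_1a=u_1^{-1}h''v_2^{-1}$, on one hand $U(v_1a)U=Unh_{v_1}U$, and on the other $U(v_1a)U=Uh''v_2^{-1}U=Un(h_{v_2'}h'')U$ with $v_2'=h''v_2^{-1}h''^{-1}$, one obtains $h''=h_{v_2'}^{-1}h_{v_1}$. Thus every $h''$ is a ratio of two elements in the image $\Sigma:=\{h_v:v\in V\setminus\{1\}\}$.

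I expect the main obstacle to be exactly this last step: upgrading \emph{``$H$ is the set of ratios of indices''} to \emph{``$H$ is the set of indices''}, i.e. proving $\Sigma=H$ rather than merely $\Sigma\Sigma=H$ (together with $\Sigma=\Sigma^{-1}$, which follows by inverting normal forms). The subtlety is that the purely formal data ($n^2=1$, $nUn=V$, $H$ normalising $U$ and $V$, and $H\subseteq UVaV$) are all invariant under the flip $g\mapsto ngn$, so they cannot by themselves force $\Sigma=H$; some genuine structural input is required. I would attempt to close the gap by showing $\Sigma$ is a subgroup, exploiting that $V$ is a group: for $v^{(1)},v^{(2)}\in V$ one has $(v^{(2)})^{-1}v^{(1)}\in V\cap\big((Unh_2U)^{-1}(Unh_1U)\big)=V\cap UV(h_2^{-1}h_1)U$, and if this yields closure of $\Sigma$ under multiplication then $\Sigma=\Sigma\Sigma=H$. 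Should a clean closure argument prove elusive, the fallback is to verify $nH\subseteq UVU$ directly in each of the three relevant realisations $A_1$, ${}^2A_2$ and ${}^2G_2$, in the same explicit spirit as the preceding lemmas.
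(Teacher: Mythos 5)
Your setup is correct and parallels the paper's: the computation $(UnhU)(Unh'U)=UV\bigl((nhn)h'\bigr)U$ is right, and the lemma does reduce to the single inclusion $nH\subseteq UVU$, which is exactly the intermediate claim $H\subseteq UnUnUn$ established in the paper's proof. But your argument is incomplete precisely where you flag it: from $H\subseteq UVaV$ you only extract that every $h''\in H$ is a ratio $h_{v'}^{-1}h_{v}$ of Bruhat indices of nontrivial elements of $V$, i.e.\ $H=\Sigma^{-1}\Sigma$ in your notation, whereas what is needed is $\Sigma=H$. Neither the proposed closure argument for $\Sigma$ nor the fallback of explicit verification in the three families is carried out, so the crucial step is missing.

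The idea you are missing is to keep $a$ as a single element rather than passing to the Bruhat indices of the factors $v_1,v_2$ in $h''=u_1v_1av_2$. Since $a\neq1$, uniqueness of the normal form (\ref{normalform}) shows $nan\notin B$, so $nan$ lies in a \emph{single} double coset $UnhU$ for one fixed $h\in H$. Consequently
\[
	H\subseteq UVaV=UnU(nan)Un\subseteq UnU(UnhU)Un=UnUnUnh^{n},
\]
and absorbing $h^{n}\in H$ into $H$ gives $H\subseteq UnUnUn$, i.e.\ $nH=Hn\subseteq UVU$; this sidesteps the $\Sigma$ versus $\Sigma^{-1}\Sigma$ difficulty entirely. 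Incidentally, your symmetry heuristic (that the listed formal data cannot decide $\Sigma=H$) is not accurate: the flip $g\mapsto ngn$ turns the hypothesis $H\subseteq UVaV$ with $a\in U$ into $H\subseteq VUa^{n}U$ with $a^{n}\in V$, which is not the same hypothesis, and in any case the argument above also uses the uniqueness of the normal form, which is not among the data you declared flip-invariant.
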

\begin{proof}
Let $a\in U$ be the nontrivial element constructed in the previous lemma. Suppose $nan\in B$. Then 
$nan=hb$ for some $h\in H$ and $b\in U$, giving $an=nhb$. Expressions of the form $UnHU$ are unique, so $a=1$, 
a contradiction. Hence $nan\notin B$, so $nan\in UnhU$ for some $h\in H$. Since $H$ normalizes $U$, we obtain
\[
	H\subseteq UnUnanUn
		\subseteq UnU(UnhU)Un
		=UnUnUnh^n.
\]
But $n$ normalises $H$, so $h^n\in H$. Hence $H\subseteq UnUnUn$. Now for arbitrary $h,h',h''\in H$, we have
\[
	nh''\in nHh^nh'=Hnh^nh'\subseteq UnUnUh^nh'=UnhUnh'U.
\]
Thus $(UnhU)(Unh'U)\supseteq Unh''U$, as required.
\end{proof}
This lemma suggests that we should apply Corollary \ref{dcstperm} to the subgroup $U$; indeed $|U|$ is odd by (\ref{|U|}), so $U$ 
possesses a complete mapping. However, the normaliser of $U$ is $B$, and $B/U\cong H$. Since $l=1$, the group $\hat{H}$ is cyclic, 
so $H$ is a cyclic group of even order. This implies that no permutations of $\HGH{U}$ can satisfy the conditions of Corollary 
\ref{dcstperm}. Nevertheless we can come close using the following lemma, which says that $H$ falls one equation short of having a 
complete mapping.
\begin{lem}\label{cyc}
If $C$ is a cyclic group of even order, then there exist permutations $\alpha$ and $\beta$ of $C$ such that $c\alpha(c)=\beta(c)$ 
for $c\neq1$. Moreover we can take $\beta(1)=1$ and $\alpha(1)\neq1$.
\end{lem}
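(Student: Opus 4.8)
The plan is to work additively, identifying the cyclic group $C$ of even order $n=2m$ with $\Z/n\Z$, and to exhibit $\alpha$ and $\beta$ by an explicit formula rather than any abstract existence argument. First I would record the obstruction that explains why one equation must be sacrificed: if the identity $c+\alpha(c)=\beta(c)$ held for all $c$, then summing over $C$ would give $\sum_c c+\sum_c\alpha(c)=\sum_c\beta(c)$, and since $\alpha,\beta$ are permutations all three sums equal $\sum_{g\in C}g=m\neq0$, a contradiction (this is precisely why $C$ has no complete mapping). Dropping the equation at $c=0$ and instead demanding $\beta(0)=0$, the same computation restricted to $c\neq0$ forces $\alpha(0)=m$. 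This pins down where the single defect must sit and guides the construction.

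Next I would simply write down
\[
  \alpha(c)=\begin{cases}m&c=0,\\ c-1&1\le c\le m,\\ c&m+1\le c\le n-1,\end{cases}
\]
and set $\beta(0)=0$ together with $\beta(c)=c+\alpha(c)$ for $c\neq0$. The crux of the argument is the computation that $\beta(c)=2c-1$ runs through the odd residues $1,3,\dots,n-1$ as $c=1,\dots,m$, while $\beta(c)=2c$ runs through the nonzero even residues $2,4,\dots,n-2$ as $c=m+1,\dots,n-1$. I expect this to be the main obstacle, or at least the only nonobvious point: a uniform shift $\alpha(c)=c+k$ would force $\beta(c)=2c+k$, which is never a permutation because $2$ is not invertible modulo the even number $n$. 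Splitting the domain at $m$ circumvents this, since the two halves cover the odd residues and the nonzero even residues separately, and together these make up every nonzero element of $C$ exactly once.

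Finally I would verify bijectivity. For $\alpha$, the three pieces have images $\{0,\dots,m-1\}$, $\{m\}$ and $\{m+1,\dots,n-1\}$, which partition $C$, so $\alpha$ is a permutation. For $\beta$, the computation above shows $\beta$ fixes $0$ and maps the nonzero elements bijectively onto the nonzero elements, so $\beta$ is a permutation as well. The ``moreover'' clause is then immediate upon translating back to multiplicative notation: the identity of $C$ corresponds to $0$, and we have arranged $\beta(0)=0$ (the identity maps to the identity) and $\alpha(0)=m\neq0$, exactly the required $\beta(1)=1$ and $\alpha(1)\neq1$.
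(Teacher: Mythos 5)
Your construction is correct and is essentially the paper's: identify $C$ with $\Z_{2m}$, send $0\mapsto m$ under $\alpha$, split the nonzero elements into two halves so that $\beta(c)=c+\alpha(c)$ covers the odd residues on one half and the nonzero even residues on the other. The only differences are the cosmetic choice of shifts on the two halves and your (nice but inessential) preliminary remark explaining why the defect must sit at the identity with $\alpha(0)=m$.
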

\begin{proof}
Identify $C$ with $\Z_{2k}$, written additively. Let $\alpha(0)=k$. For $1\leq i<k$, let $\alpha(i)=i$, and for $k\leq i<2k$, let 
$\alpha(i)=i+1$. For $0\leq i<k$, let $\beta(i)=2i$, and for $k\leq i<2k$, let $\beta(i)=2i+1$. It is clear that $i+\alpha(i)=\beta(i)$ 
for $i\neq0$. Also $\beta(i)$ takes all the even values for $0\leq i<k$, and all the odd values for $k\leq i<2k$. Thus $\beta$ is a 
permutation, and it is easy to see that $\alpha$ is a permutation also.
\end{proof}
The proof of Corollary \ref{dcstperm} constructs permutations of the left cosets satisfying the conditions of Proposition \ref{lcst}. 
To prove the next result, we apply the same construction to permutations of $\HGH{U}$ which don't quite satisfy the required conditions. 
After some tweaking, we can apply Proposition \ref{lcst} directly.
\begin{lem}\label{smallliegroups}
Suppose $G$ has type $A_1$, ${}^2A_2$ or ${}^2G_2$, $q$ is odd and $|H|$ is even. Then $G$ possesses a complete mapping.
\end{lem}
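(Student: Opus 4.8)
The plan is to apply Proposition~\ref{lcst} to the odd-order subgroup $U$, which possesses a complete mapping by (\ref{|U|}) (since $q$, hence $p$, is odd). First I would describe the left cosets of $U$. By the normal form (\ref{normalform}) with $|W|=2$, they split into $|H|$ \emph{small} cosets $hU$ ($h\in H$), filling $B=HU$, and $|H|\cdot|U|$ \emph{big} cosets $u'nhU$ ($u'\in U$, $h\in H$), filling $BnB$; the associated double cosets are the $|H|$ small ones $S_h=hU$ and the $|H|$ big ones $B_h=UnhU$. I record the coset product rules, which follow from $H$ normalising $U$ and from $nUn=V$: $(aU)(bU)=abU$ and $(u'nhU)(bU)=u'n(hb)U$, while $(aU)(u'nhU)$ sweeps out the entire big double coset $B_{a^nh}$ (independent of $u'$), and a product of two big cosets contains a prescribed small coset (namely $S_{{}^nh_1h_2}$) together with various big cosets, consistently with Lemma~\ref{UnhU} and (\ref{BsBBwB}).

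Since $l=1$, the group $H=\hat H$ is cyclic of even order, so Lemma~\ref{cyc} supplies permutations $\alpha,\beta$ of $H$ with $h\alpha(h)=\beta(h)$ for $h\neq1$, and $\beta(1)=1$, $\alpha(1)\neq1$. I would define permutations $\phi,\psi$ of $\HGH{U}$ by letting them act as $\alpha,\beta$ on the small double cosets and as the identity on the big ones; these preserve double coset sizes, as required by Corollary~\ref{dcstperm}. Using the product rules one checks $D\phi(D)\supseteq\psi(D)$ for every $D$ except $S_1$: the small cosets with $h\neq1$ give equality, the big cosets hold because $B_h^2\supseteq B_h$ by Lemma~\ref{UnhU}, and only $S_1$ fails, since $1\cdot\alpha(1)=\alpha(1)\neq1=\beta(1)$.

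Next I would run the construction from the proof of Corollary~\ref{dcstperm} verbatim on all double cosets except $S_1$. Because $\phi$ and $\psi$ are genuine permutations, this produces three injections $\tilde x,\tilde y,\tilde z$ into $G/U$ satisfying $\tilde x(i)\tilde y(i)\supseteq\tilde z(i)$, whose images miss exactly one coset each: $U$ on the $\tilde x$-side, $\alpha(1)U$ on the $\tilde y$-side, and $U$ on the $\tilde z$-side. To finish through Proposition~\ref{lcst} I must adjoin triples covering precisely these three leftover cosets; adjoining a single triple would force $U\cdot\alpha(1)U\supseteq U$, which is false, so the leftover $z$-coset $U$ can only be produced by a big$\cdot$big product.

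The bulk of the work, and the main obstacle, is therefore the final \emph{tweak}. Here I would discard the construction's triples attached to the two big double cosets $B_1$ and $B_{\alpha(1)}$ (together with the empty slot at $S_1$) and replace them by an explicitly designed family of the same total size $2|U|+1$. The design exploits that $U=1\cdot U$ is the identity coset; that $(U)(u'nU)$ sweeps all of $B_1$, so the leftover $x$-coset $U$ can pair with a big $y$-coset to yield any $z$-coset of $B_1$; that $(u'nU)(\alpha(1)U)=u'n\alpha(1)U$ absorbs the leftover $y$-coset $\alpha(1)U$ into $B_{\alpha(1)}$; and that a big$\cdot$big product of two cosets of $B_1$ contains the leftover $z$-coset $U$. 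These special triples double-use and vacate a few big cosets of $B_1$ and $B_{\alpha(1)}$, which I would rebalance by rerouting one internal triple of $B_{\alpha(1)}$ through a big$\cdot$big product landing in $B_1$ (possible since $B_{\alpha(1)}^2\supseteq B_1$). The delicate point is purely the bookkeeping: one must verify that the cosets freed and those doubly used cancel exactly, so that $\tilde x,\tilde y,\tilde z$ remain bijections $I\to G/U$ while every modified triple satisfies $\tilde z(i)\subseteq\tilde x(i)\tilde y(i)$. Once this finite rearrangement is confirmed, Proposition~\ref{lcst} yields a complete mapping of $G$.
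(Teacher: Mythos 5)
Your overall strategy is exactly the paper's: pass to the odd-order subgroup $U$ (which has a complete mapping), use Lemma \ref{cyc} to get a near-complete mapping of the cyclic group $H$ that governs the small cosets $hU$, observe that the resulting system of triples of left cosets satisfies the hypothesis of Proposition \ref{lcst} everywhere except at the single index corresponding to $1\in H$, and then repair this by a finite local rearrangement. All of your structural claims along the way are correct: the description of $G/U$ and $\HGH{U}$, the product rules, the fact that only $S_1$ fails, and the observation that a single adjoined triple $(U,\alpha(1)U,U)$ cannot work since $U\cdot\alpha(1)U=\alpha(1)U\neq U$.

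The gap is that the repair itself --- which is the entire nontrivial content of the lemma --- is described but never carried out. You propose to discard the $2|U|$ triples attached to $B_1$ and $B_{\alpha(1)}$, replace them by a redesigned family of size $2|U|+1$, and then ``verify that the cosets freed and those doubly used cancel exactly''; that verification \emph{is} the proof, and without an explicit family one cannot tell whether your rebalancing (including the extra ``rerouting'' step) actually closes up. For comparison, the paper's repair is far more economical: it keeps the maps defined on all of $I=H\amalg(U\times H)$, so they are bijections from the outset with the product condition failing only at $h=1$, and then permutes values among just three indices, namely $1\in H$, $(v_\zeta,1)$ and $(1,\zeta)$, where $\zeta=\alpha(1)^{-1}$ and $v_h\in U$ is chosen via Lemma \ref{UnhU} so that $nhU\subseteq v_hnhUnhU$. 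The three new triples are $(v_\zeta v_1nU,\,v_\zeta nU,\,U)$, a big-times-big product containing $U$ because $n^2=1$; $(v_\zeta n\zeta U,\,\alpha(1)U,\,v_\zeta nU)$, which absorbs the leftover $y$-value $\alpha(1)U$ since $\zeta\alpha(1)=1$; and $(U,\,n\zeta U,\,n\zeta U)$, which uses the leftover $x$-value $U$. One then checks that the multiset of values at these three indices is unchanged in each coordinate, so bijectivity is preserved and Proposition \ref{lcst} applies. To complete your argument you would need to exhibit an explicit assignment of this kind and check both the bijectivity and the containment conditions; as written, the proposal identifies the right mechanisms but stops short at the decisive step.
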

\begin{proof}
The left cosets of $U$ in $G$ are exactly $hU$ and $unhU$ for $u\in U$ and $h\in H$. Let $I=H\amalg(U\times H)$. We will define bijections 
$\bar x$, $\bar y$ and $\bar z$ from $I$ to $G/U$ as follows. Firstly, for any $h\in H$, Lemma \ref{UnhU} gives
\[
	nhU\subseteq UnhUnhU.
\]
Thus there exists $v_h\in U$ such that $nhU\subseteq v_hnhUnhU$. Let $\alpha$ and $\beta$ be the permutations of $H$
given by Lemma \ref{cyc}. Define
\[\begin{array}{*{2}{r@{\;=\;}l@{\hspace{10mm}}}r@{\;=\;}l}
	\bar x(h)&hU&\bar y(h)&\alpha(h)U&\bar z(h)&\beta(h)U\\
	\bar x(u,h)&uv_hnhU&\bar y(u,h)&unhU&\bar z(u,h)&unhU
\end{array}\]
Since $\alpha$ and $\beta$ are permutations, $\bar x$, $\bar y$ and $\bar z$ are certainly bijections from $I$ to $G/U$. Also
\[
	\bar x(u,h)\bar y(u,h) =uv_hnhUunhU=u(v_hnhUnhU)\supseteq unhU=\bar z(u,h)
\]
for any $(u,h)\in U\times H$, and
\[
	\bar x(h)\bar y(h)=h\alpha(h)U=\beta(h)U=\bar z(h),
\]
provided $h\neq1$. Unfortunately this does not hold when $h=1$. We therefore tweak a few values; put $\zeta=\alpha(1)^{-1}\neq 1$, and define
\[\begin{array}{*{2}{r@{\;=\;}l@{\hspace{10mm}}}r@{\;=\;}l}
	\T x(1)&v_\zeta v_1nU&\T y(1)&v_\zeta nU&\T z(1)&U\\
	\T x(v_\zeta,1)&v_\zeta n\zeta U&\T y(v_\zeta,1)&\alpha(1)U&\T z(v_\zeta,1)&v_\zeta nU\\
	\T x(1,\zeta)&U&\T y(1,\zeta)&n\zeta U&\T z(1,\zeta)&n\zeta U.
\end{array}\]
Define $\T x$, $\T y$ and $\T z$ to coincide with $\bar x$, $\bar y$ and $\bar z$ on $I-\{1,(v_\zeta,1),(1,\zeta)\}$. Recalling that $\beta(1)=1$, 
we have
\begin{eqnarray*}
	\bar x(\{1,(v_\zeta,1),(1,\zeta)\})
		&=&\{U,v_\zeta v_1nU,v_\zeta n\zeta U\}
		\;=\;\T x(\{1,(v_\zeta,1),(1,\zeta)\}),\\
	\bar y(\{1,(v_\zeta,1),(1,\zeta)\})
		&=&\{\alpha(1)U,v_\zeta nU,n\zeta U\}
		\;=\;\T y(\{1,(v_\zeta,1),(1,\zeta)\}),\\
	\bar z(\{1,(v_\zeta,1),(1,\zeta)\})
		&=&\{U,v_\zeta nU,n\zeta U\}
		\;=\;\T z(\{1,(v_\zeta,1),(1,\zeta)\}).
\end{eqnarray*}
Thus $\T x$, $\T y$ and $\T z$ are also bijections. Also
\begin{eqnarray*}
	\T x(1)\T y(1)
		&=&v_\zeta v_1nUv_\zeta nU
		\;\supseteq\;v_\zeta v_1n^2U
		\;=\;U
		\;=\;\T z(1),\\
	\T x(v_\zeta,1)\T y(v_\zeta,1)
		&=&v_\zeta n\zeta U\alpha(1)U
		\;=\;v_\zeta n\alpha(1)^{-1}\alpha(1)U
		\;=\;v_\zeta nU
		\;=\;\T z(v_\zeta,1),\\
	\T x(1,\zeta)\T y(1,\zeta)
		&=&Un\zeta U
		\;\supseteq\;n\zeta U
		\;=\;\T z(1,\zeta).
\end{eqnarray*}
Thus $\T x(i)\T y(i)\supseteq\T z(i)$ for all $i\in I$, so Proposition \ref{lcst} gives the result.
\end{proof}
Summarising these results, we have:
\begin{thm}
Suppose $G$ is a minimal counterexample to the HP conjecture. Then $G$ is one of the 26 sporadic simple groups or the Tits group.
\end{thm}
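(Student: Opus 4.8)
The plan is to combine Theorem \ref{simple} with the classification of finite simple groups (CFSG). By Theorem \ref{simple} a minimal counterexample $G$ is simple, so CFSG tells us that $G$ is a cyclic group of prime order, an alternating group $A_n$ with $n\geq5$, a finite simple group of Lie type other than the Tits group, one of the 26 sporadic groups, or the Tits group. It therefore suffices to rule out the first three families, showing in each case that the group in fact possesses a complete mapping (or is bad), which contradicts its being a counterexample.

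The cyclic and alternating cases cost essentially nothing. If $G\cong\Z_p$ with $p$ odd then $G$ is good of odd order, hence soluble, so it has a complete mapping by Proposition \ref{solu}; and $\Z_2$ is bad, so it is not a counterexample at all. For the alternating groups I would invoke the fact, due to Hall and Paige \cite{HP}, that every $A_n$ possesses a complete mapping; since each such group is nonabelian simple and hence good (by Corollary \ref{badcompl} a bad group is soluble), none can be a minimal counterexample.

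The substance of the argument is the Lie-type case, which is exactly what Section \ref{sec:lie} is built to handle. Assuming $G$ is a simple group of Lie type other than the Tits group, I would pick a simple reflection $r\in S$ and pass to the maximal parabolic subgroup $P=\coprod_{w\in W'}BwB$ attached to $W'=\langle S-\{r\}\rangle$. The goal is to choose a single $r$ for which $P$ is \emph{good} and every double coset $D\in\HGH{P}$ satisfies $D^2\supseteq D$, so that Corollary \ref{dcst} applies. Lemmas \ref{Pgood} and \ref{P^2} supply exactly this: for every type except $B_2$ and ${}^2A_3$, any $r$ makes $P$ good while a suitable $r$ secures $D^2\supseteq D$, and for $B_2$ and ${}^2A_3$ the roles reverse (every $r$ gives $D^2\supseteq D$, while a suitable $r$ gives goodness). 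In either situation a common choice of $r$ exists. Since $P$ is then a proper good subgroup of $G$, minimality of the counterexample forces $P$ to possess a complete mapping, and Corollary \ref{dcst} produces one on $G$, a contradiction.

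The genuine obstacle is the low-rank exception flagged in Lemma \ref{Pgood}: when $G$ has type $A_1$, ${}^2A_2$ or ${}^2G_2$ with $q$ odd and $|H|$ even, the Weyl group has rank one, so the only proper parabolic is $B$ itself, which is bad. Here the approach through $P$ breaks down and must be replaced by the explicit analysis of $\HGH{U}$ carried out in Lemmas \ref{UnhU}--\ref{smallliegroups}: one works with the unipotent subgroup $U$ (which is good, indeed of odd order by (\ref{|U|})) and repairs the failure of the cyclic group $H$ to admit a complete mapping (Lemma \ref{cyc}) by the hand-adjusted cosets of Lemma \ref{smallliegroups}. Granting that lemma, these groups too possess complete mappings. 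Having eliminated the cyclic, alternating and all Lie-type groups, the only simple groups left are the 26 sporadic groups and the Tits group, completing the proof.
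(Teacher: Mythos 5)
Your proposal is correct and follows essentially the same route as the paper: Theorem \ref{simple} plus the classification, Proposition \ref{solu} for cyclic groups, Hall--Paige for alternating groups, Lemmas \ref{Pgood} and \ref{P^2} with Corollary \ref{dcst} for Lie type, and Lemma \ref{smallliegroups} for the rank-one exceptions. Your explicit remark reconciling the choices of $r$ between Lemmas \ref{Pgood} and \ref{P^2} (the roles of ``any $r$'' versus ``a suitable $r$'' swapping for $B_2$ and ${}^2A_3$) is a point the paper leaves implicit, but the argument is otherwise identical.
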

\begin{proof}
By Theorem \ref{simple}, the group $G$ must be simple. Therefore $G$ is either a cyclic group, an alternating group, a simple group of Lie 
type, the Tits group, or one of the 26 sporadic groups \cite{atlas}. The HP conjecture holds for cyclic groups by Proposition \ref{solu}. 
It holds for alternating groups by Theorem 3 of \cite{HP}. Suppose $G$ is a simple group of Lie type. Suppose $G$ is not covered by Lemma 
\ref{smallliegroups}. Then Lemmas \ref{Pgood} and \ref{P^2} show that $G$ has a good proper subgroup $P$ whose double cosets $D$ satisfy 
$D^2\supseteq D$. By the minimality assumption, $P$ admits a complete mapping, so Corollary \ref{dcst} shows that $G$ admits a complete 
mapping. Therefore the only remaining groups are the sporadic groups and the Tits group.
\end{proof}
\section{acknowledgements}
The author would like to thank Peter McNamara and Martin Isaacs for useful discussions concerning Lemma \ref{p}, and also Bob Howlett for explaining how to find minimal double coset representatives with MAGMA.
\bibliographystyle{plain}
% \bibliography{hallpaige}

\end{document}